\documentclass[a4paper]{amsart}
\usepackage[T1]{fontenc}
\usepackage{cite}
\usepackage{sansmathfonts}

\usepackage[english]{babel}
\usepackage{amsmath}
\usepackage{amsfonts}
\usepackage{enumerate}
\usepackage[colorinlistoftodos,prependcaption,textsize=tiny]{todonotes}
\usepackage{amssymb}
\usepackage{amsthm}
\newtheorem{Thm}{Theorem}[section]
\newtheorem{Lmm}[Thm]{Lemma}
\newtheorem{Prp}[Thm]{Proposition}

\newtheorem{Crl}[Thm]{Corollary}

\theoremstyle{definition}
\newtheorem{Dfn}[Thm]{Definition}

\theoremstyle{remark}
\newtheorem{Rmk}[Thm]{Remark}

\usepackage[all]{xy}

\usepackage{mathrsfs}

\newcommand{\C}{\mathbb{C}}
\newcommand{\N}{\mathbb{N}}
\newcommand{\R}{\mathbb{R}}
\newcommand{\V}{\mathscr{C}}

\DeclareMathOperator{\psl}{\mbox{PSL}}
\newcommand{\CP}{\C\mbox{\upshape{P}}}
\DeclareMathOperator{\Eq}{Eq}
\newcommand{\Gr}{\mbox{\upshape{Gr}}}
\newcommand{\llangle}{\langle\!\langle}

\newcommand{\rrangle}{\rangle\!\rangle}

\DeclareMathOperator{\LKul}{\Lambda_{Kul}}
\newcommand{\OKul}{\Omega_{Kul}}
\DeclareMathOperator{\pu}{PU}

\newcommand{\GL}{\mbox{\upshape{GL}}}
\DeclareMathOperator{\SL}{SL}
\newcommand{\im}{\mbox{\upshape{im}}}

\DeclareMathOperator{\Ps}{QP}

\newcommand{\Hy}{\mathbb{H}}

\newcommand{\ds}{\displaystyle}

\newcommand{\llP}{[\![}
\newcommand{\rrP}{]\!]}

\DeclareMathOperator{\Isom}{Isom}
\newcommand{\LMyr}{\Lambda_{Myr}}
\title[Comparison of Limit sets]{\sc{Comparison of limit sets for the action of Kleinian groups in $\CP^n.$}}

\begin{document}

\author{Alejandro Ucan-Puc}
\address{CONACYT/Institut de Mathématiques de Jussieu Paris Rive Gauche}
\email{alejandro.ucan-puc@imj-prg.fr}

\author{Jose Seade}
\address{ Instituto de Matemáticas, UNAM}
\email{jseade@im.unam.mx}


\maketitle
\begin{abstract}
We compare different notions of limit sets for the action of Kleinian groups on the $n-$dimensional projective space via the irreducible representation $\varrho:\psl(2,\C)\to\psl(n+1,\C).$ In particular, we prove that if the Kleinian group is convex-cocompact, the Myrberg and the Kulkarni limit  coincide.
\end{abstract}

\section*{introduction}
The notion of  \emph{complex Kleinian group} was introduced by Seade and Verjovsky in \cite{Seade2002} as discrete subgroups of  $\psl(n+1,\C),$ $\Gamma$ acting on $\CP^n,$ such that there exists an non-empty open subset of $\CP^n$ where the action of $\Gamma$ is properly-discontinuous. Complex Kleinian groups are examples of really rich holomorphic dynamics, and a way to generalize the Kleinian group theory into a (complex) higher dimensional setting.

Some examples of complex Kleinian groups are \emph{complex hyperbolic groups}, i.e., discrete subgroups of the isometries of the complex hyperbolic space $\Isom(\Hy^n_\C),$ see \cite{Navarrete2006,Cano2017}, for such groups the complex hyperbolic space is part of the open set where the action is properly-discontinuous. Other examples are discrete subgroups of $\psl(3,\C)$ whose action on $\CP^2$ is irreducible, see \cite{Barrera2011}, or when the subgroup is virtually solvable, see \cite{Barrera2021}. Notice that in dimensions bigger that two there no known examples beside the complex hyperbolic case and cyclic subgroups of $\psl(3,\C)$ (see \cite{Cano2017a}), for this reason it is important to provide new examples.

For Kleinian groups the open set of $\CP^1$ where the action is properly discontinuous correspond to the complement of the \emph{limit set,} $\Lambda(\Gamma),$ defined as the closure of accumulation points of the orbits of the group. In the higher-dimensional complex setting, this definition is not sufficient to provide an open subset where the action if properly-discontinuous. For example, if we take the cyclic group of $\psl(3,\C)$ generated by a diagonal matrix $\gamma,$ whose elements are $|\lambda_1|>|\lambda_2|>|\lambda_3|>0,$ $\Lambda(\gamma)$ correspond to three (fixed) points in $\CP^2,$ but the action is not properly discontinuous (see Proposition 3.1.1 in \cite{Cano2013}). Therefore, in higher dimensions, we need to look for other notions of limit sets, for example:
\begin{itemize}
\item The complement of the Equicontinuity region.
\item The Kulkarni limit set (see Definition see \ref{Dfn:KulkarniLimitSet}).
\item The Conze-Guivarc'h limit set (see Definition \ref{Dfn:ConzeGuivarchlimitset}).
\end{itemize}
 
For Kleinian groups of $\psl(2,\C)$ the all previous limit sets coincide with $\Lambda(\Gamma)$ and the associated open set is the maximal open set where the action is properly discontinuous. In higher dimensions, none of the above limit set guarantee that the associated open set of $\CP^n$ is the \emph{maximal} subset where the action is properly-discontinuous and not necessarily all coincide.

The lack of examples of complex Kleinian groups in higher dimensions follows from the fact that the hard computation of the different limit sets and the no precise relations between them. 

One method to provide examples of complex Kleinian groups is via the \emph{irreducible representation} of $\psl(2,\C)$ into $\psl(n+1,\C),$ we call such Kleinian groups \emph{Veronese groups} because their action is intimately related to the Veronese embedding of $\CP^1$ into $\CP^n.$ Naively, we will study the action the well-known Kleinian groups of $\psl(2,\C)$ in $\CP^n.$ The main contribution of this paper relies on the description of the different notion of limit set for some Veronese groups and provide relations between them. In particular, it is proved that for convex co-compact subgroups of $\psl(2,\C),$ the Kulkarni limit set of the corresponding Veronese group coincides with the complement of the region of Equicontinuity, see Theorem \ref{MThm:Kulkarni}.

\begin{Thm}[Main Theorem]\label{MThm:Kulkarni}
Let $\Gamma\subset \psl(2,\C)$ be a Kleinian group, and let $G=\varrho(\Gamma)$ be the associated Veronese group in $\psl(n+1,\C).$ If $\Gamma$ is convex co-compact, then 
\begin{equation}
\LKul(G)=\CP^n\setminus \Eq(G).
\end{equation}
\end{Thm}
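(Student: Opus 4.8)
The plan is to prove $\LKul(G)=\CP^n\setminus\Eq(G)$ by showing that both sides equal the closed set $\Lambda^\ast:=\bigcup_{\xi\in\Lambda(\Gamma)}H_\xi$, where $\Lambda(\Gamma)\subset\CP^1$ is the classical limit set of $\Gamma$, $\iota\colon\CP^1\hookrightarrow\CP^n$ is the Veronese embedding with image the rational normal curve $\mc C$, and $H_\xi\subset\CP^n$ is the osculating hyperplane of $\mc C$ at $\iota(\xi)$ (the unique hyperplane meeting $\mc C$ only at $\iota(\xi)$, with contact of order $n$; for $n=2$ it is the tangent line). Two preliminary observations. First, $\xi\mapsto H_\xi$ is itself a Veronese-type embedding $\CP^1\to(\CP^n)^\ast$, hence continuous and proper, so $\Lambda^\ast$ is compact. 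Second, if $\gamma\in\Gamma$ is loxodromic with attracting, resp.\ repelling, fixed point $\xi$, resp.\ $\xi'$, then since $\varrho$ is conjugate to $\mathrm{Sym}^{n}$ the element $\varrho(\gamma)$ is diagonalizable with eigenvalues $\lambda^{n},\lambda^{n-2},\dots,\lambda^{-n}$ (where $\lambda^{2}$ is the multiplier of $\gamma$) and eigenbasis $e_0,\dots,e_n$ the union of the osculating flags of $\mc C$ at $\iota(\xi)$ and at $\iota(\xi')$; in these coordinates $H_\xi=\{z_n=0\}$ and $H_{\xi'}=\{z_0=0\}$.

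\emph{Step 1: $\CP^n\setminus\Eq(G)=\Lambda^\ast$.} For $\Lambda^\ast\subseteq\CP^n\setminus\Eq(G)$, note that loxodromic fixed points are dense in $\Lambda(\Gamma)$ and $\Eq(G)$ is open, so, $\Lambda^\ast$ being closed, it suffices to check $H_\xi\cap\Eq(G)=\varnothing$ for $\xi$ the attracting fixed point of a loxodromic $\gamma$; this follows from the diagonal form of $\varrho(\gamma)$, as points of $H_\xi=\{z_n=0\}$ and points just off it are driven by the powers of $\varrho(\gamma)$ to different osculating-flag positions. For the reverse, fix $z\notin\Lambda^\ast$, so $d(z,\Lambda^\ast)=\delta>0$. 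Let $\gamma_k\to\infty$ in $\Gamma$ and write the Cartan decomposition $\gamma_k=u_k\,\mathrm{diag}(\mu_k,\mu_k^{-1})\,v_k$ with $u_k,v_k\in\mathrm{SU}(2)$ and $\mu_k\to\infty$. Then $\varrho(\gamma_k)=\varrho(u_k)\,\mathrm{diag}(\mu_k^{n},\dots,\mu_k^{-n})\,\varrho(v_k)$ with $\varrho(u_k),\varrho(v_k)$ unitary, so the singular values of $\varrho(\gamma_k)$ are $\mu_k^{n},\dots,\mu_k^{-n}$ and all consecutive gaps are $\mu_k^{2}\to\infty$; consequently $\varrho(\gamma_k)$ converges uniformly on $\{\,d(\cdot,H_{\eta_k})\ge\varepsilon\,\}$, for every $\varepsilon>0$, to $\iota(\xi_k^{+})$, where $\xi_k^{+}=u_k\!\cdot\!\infty$ and $\eta_k=v_k^{-1}\!\cdot\!0$ are the attracting and repelling data of $\gamma_k$ on $\CP^1$ (here $\varrho(u_k)[e_0]=\iota(\xi_k^{+})$ and $\varrho(v_k)^{-1}\{z_0=0\}=H_{\eta_k}$). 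Since orbits of points of $\CP^1\setminus\Lambda(\Gamma)$ accumulate only on $\Lambda(\Gamma)$, after passing to a subsequence $\xi_k^{+}\to\xi^{+}\in\Lambda(\Gamma)$ and $\eta_k\to\eta\in\Lambda(\Gamma)$; then $H_{\eta_k}\to H_\eta\subseteq\Lambda^\ast$, so eventually $d(z,H_{\eta_k})>\delta/2$ and $\varrho(\gamma_k)\to\iota(\xi^{+})$ uniformly on $B(z,\delta/4)$. As every divergent sequence in $G$ has such a subsequence, $z\in\Eq(G)$. (When $\Lambda(\Gamma)=\CP^1$ one has $\Lambda^\ast=\CP^n$ and $\Eq(G)=\varnothing$, so the identity is trivial; assume $\Lambda(\Gamma)\ne\CP^1$ below.)

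\emph{Step 2: $\LKul(G)=\Lambda^\ast$.} For $\LKul(G)\subseteq\Lambda^\ast$ one may either invoke the general inclusion $\Eq(G)\subseteq\OKul(G)$, which with Step 1 gives $\LKul(G)\subseteq\CP^n\setminus\Eq(G)=\Lambda^\ast$, or argue directly that each of Kulkarni's three sets lies in $\Lambda^\ast$: $L_0(G)$ is contained in the union of eigendirections of loxodromic elements and fixed points of parabolic elements, all lying in osculating flags over $\Lambda(\Gamma)$; and the Cartan analysis of Step 1, together with the fact that the relevant compact sets avoid $\iota(\Lambda(\Gamma))$, shows that every cluster point of $\varrho(\gamma_k)(z)$, resp.\ of $\varrho(\gamma_k)(K)$, is an osculating-flag point at some $\iota(\xi^{+})$ with $\xi^{+}\in\Lambda(\Gamma)$. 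In fact this identifies $L_0(G)\cup L_1(G)$ with a finite union of curves, each the image of $\Lambda(\Gamma)$ under an osculating-flag map, hence a closed $1$-dimensional subset of $\Lambda^\ast$. For the reverse inclusion $\Lambda^\ast\subseteq\LKul(G)$, by density of loxodromic fixed points and closedness of both sides it suffices to show $H_\xi\subseteq\LKul(G)$ when $\xi$ is the attracting fixed point of a loxodromic $\gamma$. Since $L_0(G)\cup L_1(G)$ is only $1$-dimensional we may choose a small ball $K\subset\CP^n\setminus(L_0(G)\cup L_1(G))$ meeting the osculating line of $\mc C$ at $\iota(\xi')$ transversally; a direct computation in the diagonal coordinates above shows that $\varrho(\gamma)^{m}(K)$ clusters, as $m\to+\infty$, onto the whole hyperplane $H_\xi=\{z_n=0\}$ (the expansion of $\varrho(\gamma)^{m}$ transverse to $H_{\xi'}=\{z_0=0\}$ sweeps $K$ across $H_\xi$), so $H_\xi\subseteq L_2(G)\subseteq\LKul(G)$.

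Combining Steps 1 and 2 yields $\CP^n\setminus\Eq(G)=\Lambda^\ast=\LKul(G)$, which is the claim. I expect the main obstacle to be the bookkeeping of $L_1(G)$ and $L_2(G)$ in Step 2: one must pin down $L_0(G)\cup L_1(G)$ precisely enough (as the finite union of osculating-flag curves over $\Lambda(\Gamma)$) to be sure the ``spreading'' ball $K$ in the last inclusion can be placed off it, and one must verify that the transverse-expansion argument really does fill all of $H_\xi$ and not merely a lower-dimensional piece. This is exactly where convex cocompactness enters: it makes $\Gamma$ a uniform convergence group on $\Lambda(\Gamma)$ with every limit point conical and no parabolics, which is what gives the uniform contraction needed globally in Step 1 and prevents $L_1(G)$ from acquiring an extra accumulation locus that could obstruct the construction of $K$. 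A minor technical point is checking that $\xi\mapsto H_\xi$ is continuous and proper on $\Lambda(\Gamma)$, so that $\Lambda^\ast$ is genuinely closed.
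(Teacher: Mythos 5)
Your overall architecture coincides with the paper's: both proofs first identify $\CP^n\setminus\Eq(G)$ with the union of osculating hyperplanes $\bigcup_{z\in\Lambda(\Gamma)}T_{\psi(z)}\V$ via the KAK/Cartan decomposition and the singular-value gaps of $\varrho(\gamma_k)$ (this is the paper's theorem on the Myrberg limit set together with its $\lambda$-lemma, Proposition \ref{Prp:Lambdalemma}), and both obtain $\LKul(G)\subseteq\CP^n\setminus\Eq(G)$ from the general inclusion $\Eq(G)\subseteq\OKul(G)$. The divergence, and the place where your argument has a genuine gap, is the reverse inclusion. You reduce to the osculating hyperplane $H_\xi$ at an attracting fixed point of a single loxodromic $\gamma$ and run a transverse-expansion computation on a ball $K$ meeting the tangent line of $\V$ at $\iota(\xi')$; that computation is correct (it is exactly the mechanism behind the $\lambda$-lemma), but to conclude $H_\xi\subseteq L_2(G)$ you must place $K$ inside $\CP^n\setminus(L_0(G)\cup L_1(G))$ while still meeting the punctured tangent line at $\iota(\xi')$. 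Your justification is the assertion that $L_0(G)\cup L_1(G)$ is a finite union of one-(complex-)dimensional osculating-flag curves; this is stated, acknowledged by you as the main obstacle, and never proved. It is not innocuous: $L_1(G)$ consists of cluster points of orbits of all points of $\CP^n\setminus L_0(G)$, including points already lying in osculating flags, and a priori it is parametrized by pairs of limit points together with a flag index, so nothing immediate forces it to miss the tangent line at $\iota(\xi')$. (Your parenthetical also defers the case $\Lambda(\Gamma)=\CP^1$ --- a cocompact lattice is convex cocompact --- where only Step 1 is trivial; Step 2 still requires $\LKul(G)=\CP^n$, and the ball placement is most delicate exactly there.)

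The paper avoids this entirely with a short dichotomy that you could adopt: given $z\in T_{\psi(p)}\V$ with $p$ conical, the $\lambda$-lemma furnishes $x_m\to x\in F_j^-\setminus F_{j+1}^-$ with $g_m(x_m)\to z$; if infinitely many $x_m$ lie in $L_0(G)\cup L_1(G)$, then $z\in L_0(G)\cup L_1(G)$ because that set is closed and $G$-invariant, while otherwise a tail of $(x_m)$ lies in a compact subset of $\CP^n\setminus(L_0(G)\cup L_1(G))$ and hence $z\in L_2(G)$. Either way $z\in\LKul(G)$, with no structural information about $L_0(G)\cup L_1(G)$ required; this also treats arbitrary conical limit points directly, so no density-and-closure reduction and no exclusion of the lattice case is needed. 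To complete your version, either prove the dimension claim for $L_0(G)\cup L_1(G)$ or replace the ball-placement step by this dichotomy.
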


The previous theorem provides a generalization of the results known for the cases of complex Kleinian groups in dimension 3 and the complex hyperbolic groups.

This is not the first time that the irreducible action of $\psl(2,\C)$ in $\psl(n+1,\C)$ is studied, we can refer to the \emph{Anosov representations} in \cite{Guichard2012, Dumas2020} where the authors describe domains of discontinuity in flag manifolds. Moreover, the particularity of being Anosov implies the existence of a map from the boundary of the group to the flag manifold, map known as the \emph{limit map}. The techniques used to describe the complement of the Equicontinuity Region and the Kulkarni limit set can be used to describe a the limit maps in the particular case of Veronese groups. As a consequence of Theorem \ref{MThm:Kulkarni}, the Kulkarni limit set is also related to the limit maps of the Anosov representation when the group is convex co-compact. 

In \cite{Conze2000} the authors provide the definition of a limit set using the \emph{proximal} property of maps in $\psl(n+1,\C),$ this limit set is called the \emph{Conze-Guivarc'h limit set} and it is defined for discrete subgroups of $\psl(n+1,\C)$ that contains proximal elements and act strongly irreducible on $\CP^n$ (see Definition \ref{Dfn:ConzeGuivarchlimitset}). The notion of Conze-Guivarc'h limit set extends to the Veronese groups and we can define a new dominant subspaces that generalize the proximality of the elements in the group. We propose a more adequate notion of limit set called the \emph{Extended Conze-Guivarc'h limit set}; such limit set explodes the properties of the irreducible representation and the \emph{KAK-decomposition} of elements of $\psl(2,\C).$ This new notion of limit set allow us to reduce the dimension of the projective subspaces where the orbits accumulates and it has a minimal property as in the Conze-Guivarc'h case.

\begin{Thm}[Main Theorem 2]
 Let $\Gamma\in\psl(2,\C)$ a discrete group that satisfy the (CG)-property, and let $G$ denotes the associated Veronese group. Then $G$ acts properly discontinuous on $\CP^n\setminus L^{Ext}_{CG}(G),$ the complement of the Extended Conze-Guivarc'h limit set. The set $L^{Ext}_{CG}(G)$ is a minimal $G-$invariant closed set where the action is properly discontinuous. 
\end{Thm}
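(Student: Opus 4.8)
The plan is to treat the two assertions — proper discontinuity of the $G$-action on $\CP^n\setminus L^{Ext}_{CG}(G)$, and minimality of $L^{Ext}_{CG}(G)$ among closed $G$-invariant sets with that property — separately, using the $KAK$-decomposition of $\psl(2,\C)$ together with the eigenvalue pattern of $\varrho$ as the main tool. The key elementary fact is that $\varrho$ sends $\mathrm{diag}(t,t^{-1})$ to $\mathrm{diag}(t^{n},t^{n-2},\dots,t^{-n})$, so every consecutive eigenvalue ratio equals $|t|^{2}$; hence along any divergent sequence the whole eigenflag separates, which is precisely what forces orbit accumulation onto the extended dominant subspaces. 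First I would set up the resulting dynamical normal form: given $g_k=\varrho(\gamma_k)\in G$ with $g_k\to\infty$, write $\gamma_k=\kappa_k a_k\kappa_k'$ with $\kappa_k,\kappa_k'\in\pu(2)$ and $a_k=\mathrm{diag}(t_k,t_k^{-1})$, $|t_k|\to\infty$, and pass to a subsequence with $\kappa_k\to\kappa$ and $\kappa_k'\to\kappa'$. Then $g_k=\varrho(\kappa_k)\,\varrho(a_k)\,\varrho(\kappa_k')$, and one reads off that $g_k$ converges projectively, uniformly on compact subsets lying off the repelling hyperplane (the limit of $\varrho(\kappa_k')^{-1}\{x_0=0\}$), to the constant map onto $\varrho(\kappa)[e_0]$, while on each flag stratum $\{x_0=\dots=x_{j-1}=0\}$ it collapses onto the corresponding dominant subspace of the sequence. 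By the definition of $L^{Ext}_{CG}(G)$, these attracting and dominant subspaces — and, by the same analysis applied to $g_k^{-1}$, the repelling ones — lie in $L^{Ext}_{CG}(G)$, which is moreover closed and $G$-invariant directly from its construction.

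Granting this normal form, proper discontinuity on the complement is essentially formal. Let $K\subset\CP^n\setminus L^{Ext}_{CG}(G)$ be compact; since $L^{Ext}_{CG}(G)$ is closed, $K$ stays a positive distance from it, hence from the repelling subspace of every divergent sequence. If $\{g\in G: g(K)\cap K\neq\emptyset\}$ were infinite, extract a divergent subsequence $g_k$ with $g_k(K)\cap K\neq\emptyset$ for all $k$; but by the normal form $g_k(K)$ accumulates only inside attracting/dominant subspaces, hence inside $L^{Ext}_{CG}(G)$, so $g_k(K)\cap K=\emptyset$ for $k$ large — a contradiction. Thus the action on $\CP^n\setminus L^{Ext}_{CG}(G)$ is properly discontinuous.

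For minimality, let $\Lambda'$ be any closed $G$-invariant set with $G$ acting properly discontinuously on $\CP^n\setminus\Lambda'$; I must show $L^{Ext}_{CG}(G)\subseteq\Lambda'$. Since $G$ is infinite and $\CP^n$ compact, $\Lambda'\neq\emptyset$. The base case is that the attracting fixed point $p^{+}$ of any proximal $g\in G$ — such elements exist by the (CG)-property — lies in $\Lambda'$: otherwise a small closed ball $B$ around $p^{+}$ inside $\CP^n\setminus\Lambda'$ would satisfy $g^{k}(B)\subseteq B$ for all large $k$, contradicting proper discontinuity. By $G$-invariance and closedness $\overline{G\cdot p^{+}}\subseteq\Lambda'$, and by strong irreducibility this orbit closure exhausts the ordinary Conze-Guivarc'h limit set. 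It then remains to force the extra, lower-dimensional dominant subspaces into $\Lambda'$: if $q$ lies in such a subspace with $q\notin\Lambda'$, one seeks a divergent sequence $g_k\in G$ and a point $x\notin\Lambda'$ with $g_k(x)\to q$, so that a compact neighbourhood of $q$ in the complement is revisited by infinitely many elements of $G$ — the same contradiction. Producing $x$ outside $\Lambda'$ is where strong irreducibility is used: one precomposes the natural sequence realizing $q$ with a fixed element of $G$ chosen to move the relevant repelling locus off the already-established part of $\Lambda'$, leaving a generic $x$ in the complement still mapped to $q$. Taking closures then gives $L^{Ext}_{CG}(G)\subseteq\Lambda'$.

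The routine parts are the proper-discontinuity step, once the normal form is available, and the base case of minimality. The hard part will be completing the minimality argument — forcing the \emph{higher-dimensional} extended dominant subspaces, not merely the attracting fixed points of single proximal elements, into every admissible $\Lambda'$. The difficulty is that the strata on which one would naively iterate (the repelling subspaces of single elements) are not known a priori to lie in $\Lambda'$, so one must build genuinely new divergent sequences and use strong irreducibility to rotate the repelling loci out of the complement; carrying this out is exactly where the $KAK$-decomposition of $\psl(2,\C)$ and the eigenvalue spacing $t^{n},t^{n-2},\dots,t^{-n}$ of $\varrho$ have to be used together.
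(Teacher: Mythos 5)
Your argument for proper discontinuity is sound and in fact more complete than what the paper records: the paper's proof of this half only observes, via Proposition \ref{Prp:Lambdalemma}, that each cyclic group $\langle\varrho(\gamma)\rangle$ generated by a proximal $\gamma$ acts properly discontinuously off its own dominant subspace, whereas you run the compactness argument for arbitrary divergent sequences in $G$. One point you gloss over and should make explicit: the assertion that $g_k(K)$ ``accumulates only inside attracting/dominant subspaces, hence inside $L^{Ext}_{CG}(G)$'' requires the dimension count that justifies the choice of index $\lfloor\frac{n+2}{2}\rfloor$. A compact $K$ disjoint from $L^{Ext}_{CG}(G)$ still meets the repelling hyperplane $F^-_2$ of a divergent sequence, and points of $K$ in a deep stratum $F^-_j\setminus F^-_{j+1}$ are pushed onto $F^+_j$, which for $j>\lfloor\frac{n+2}{2}\rfloor$ is \emph{not} contained in $L^{Ext}_{CG}(G)$. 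What saves the argument is that the repelling dominant subspace (the attracting dominant subspace of $(g_k^{-1})$, based at the repelling endpoint in $\Lambda(\Gamma)$) equals $F^-_{j_0}$ with $j_0=\lceil\frac{n+2}{2}\rceil$ and lies in $L^{Ext}_{CG}(G)$; hence $K$ only meets strata with $j<j_0$, and $F^+_j\subset F^+_{\lfloor\frac{n+2}{2}\rfloor}\subset L^{Ext}_{CG}(G)$ for those $j$. This is exactly why the half-dimensional index is the right one, and it should appear in the proof rather than be implicit.

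For minimality there is a genuine gap, which you yourself flag. Your base case only places the attracting fixed points of proximal elements, i.e.\ the zero-dimensional set $\psi(\Lambda(\Gamma))\subset\V$, into an arbitrary admissible $\Lambda'$; the content of the theorem is that the entire $\lfloor n/2\rfloor$-dimensional osculating subspaces $\xi^+_{\lfloor\frac{n+2}{2}\rfloor}(\psi(p))$ must also lie in $\Lambda'$. The obstruction is concrete: to force $q\in\xi^+_{\lfloor\frac{n+2}{2}\rfloor}(\psi(p))\setminus\V$ into $\Lambda'$ via the $\lambda$-lemma you need a sequence $x_k\to x$ with $g_k(x_k)\to q$, and the admissible limits $x$ lie in a stratum $F^-_j\setminus F^-_{j+1}$ of the repelling flag with $j\geq 2$; nothing established so far tells you whether such an $x$ can be chosen outside $\Lambda'$, and without that the contradiction with proper discontinuity on $\CP^n\setminus\Lambda'$ does not arise. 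Your proposed repair (precomposing by group elements and invoking strong irreducibility to rotate the repelling locus) is a plausible strategy but is not executed, so the minimality claim is not proved. For what it is worth, this is also where your route diverges most from the paper: the paper's own minimality proof starts from $\zeta\in W$ and argues that $\zeta$ lies in some $\xi^+_{\lfloor\frac{n+2}{2}\rfloor}$, i.e.\ it is aimed at the containment $W\subseteq L^{Ext}_{CG}(G)$ (a maximality statement) rather than $L^{Ext}_{CG}(G)\subseteq W$, and moreover begins from the unjustified claim that $W\subset\CP^n\setminus\Eq(G)$. Your proposal at least attacks the containment that the word ``minimal'' actually requires, but as it stands neither argument closes the decisive step.
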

 


The paper is organized as follows: in Section \ref{S:Preliminaries} we define the different notions of limit set that are studied on the complex Kleinian group theory, also we provide useful results to they computation. Over Section \ref{S:Irreducible} we describe the $\psl(2,\C)$ action on $\CP^n$ via the irreducible representation, and properties related to complex Kleinian groups. In Section \ref{S:Limitsets} we compute the Myrberg limit set and the Kulkarni limit set of convex co-compact Kleinian groups. Finally, on Section \ref{S:CGlimitset} we introduce the extended Conze-Guivarc'h limit set.

\section{Preliminaries} \label{S:Preliminaries}
\subsection{The Myrberg limit set}

The Myrberg limit set is one of the notions of limit set used in the literature, whose importance relies with its relation with the Equicontinuity region for the complex Kleinian groups. Recall that for a group $G$ acting on a manifold, the \emph{equicontinuity region} $\Eq(G)$ is the open subset of the manifold where the restricted group is a normal family. For discrete subgroups of $\psl(n+1,\C)$ the equicontinuity region is intimately related to the \emph{Quasi-projective transformations set}, introduced by \cite{Furstenberg1963}. 

Let $T:\C^{n+1}\rightarrow\C^{n+1}$ a linear transformation with non-trivial kernel and denote $[\ker T]$ the projectivization of its kernel, the map $T$ induce a map from $\CP^n\setminus[\ker T]$ to $\CP^n,$ denoted by $\llP T\rrP,$ given by
\begin{equation}
\label{S1:Eq1:PseudoProj}
\llP T\rrP ([z])= [T(z)].
\end{equation}

We will call the map $\llP T\rrP$ a \emph{quasi-projective map} and if $M(n+1,\C)$ denotes the set of linear transformations from $\C^{n+1}$ to $\C^{n+1},$ the set $M(n+1,\C)\setminus\{0\}/\C^*$ induces the set of quasi-projective maps and we will denoted by $\Ps(n+1,\C).$

The following proposition describe the relation of the quasi-projective transformations and the equicontinuity set. 

\begin{Prp}(see \cite{Cano2010})
\label{Prop:EquicontinuityGeneral}
Let $(\gamma_m)_{m\in\N}\subset \psl(n+1,\C)$ a sequence of distinct elements then:
\begin{enumerate}[i.]
\item There is a subsequence $(\gamma_{m_j})_{j\in\N}$ and $\gamma_0\in\Ps(n+1,\C)$ such that \[\gamma_{m_j}\xymatrix{\ar[r]_{j\rightarrow\infty}&}\gamma_0\] as points in $\Ps(n+1,\C).$
\item If $(\gamma_{m_j})_{j\in\N}$ is the sequence given by the previous part of this lemma, then $\gamma_{m_j}\xymatrix{\ar[r]_{j\rightarrow\infty}&}\gamma_0,$ as functions, uniformly on compact sets of $\CP^n\setminus[\ker \gamma_0].$
\end{enumerate}
\end{Prp}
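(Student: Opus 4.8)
The plan is to realize both statements as consequences of compactness of the projective space of matrices together with a standard uniform-convergence argument. First I would lift each $\gamma_m$ to a matrix $\tilde\gamma_m \in \GL(n+1,\C)$ and rescale: since $\Ps(n+1,\C) = (M(n+1,\C)\setminus\{0\})/\C^*$ is the projectivization of a finite-dimensional vector space, it is a compact metric space (indeed $\CP^{N}$ with $N=(n+1)^2-1$). Normalizing each lift so that $\|\tilde\gamma_m\|=1$ in some fixed norm, the sequence $(\tilde\gamma_m)$ lives in the unit sphere of $M(n+1,\C)$, which is compact; hence there is a subsequence $(\tilde\gamma_{m_j})$ converging to some $T$ with $\|T\|=1$, in particular $T\neq 0$. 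Setting $\gamma_0 = \llP T\rrP \in \Ps(n+1,\C)$ gives $\gamma_{m_j}\to\gamma_0$ in $\Ps(n+1,\C)$, which is part (i). One small point to address: the choice of norm-one representative is only well defined up to a unit complex scalar, but since $\Ps(n+1,\C)$ is the quotient by $\C^*$ this ambiguity disappears at the level of quasi-projective maps, so the limit $\gamma_0$ is well defined as a point of $\Ps(n+1,\C)$.

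For part (ii) I would argue directly at the level of the lifts. Fix a compact set $K\subset \CP^n\setminus[\ker\gamma_0]$ and choose a continuous (e.g. local holomorphic) section $s\colon K\to \C^{n+1}\setminus\{0\}$ of the tautological bundle, so that $[s(x)]=x$ for all $x\in K$; by compactness we may take $\|s(x)\|$ bounded above and below away from $0$. Since $K$ avoids $[\ker\gamma_0]$ and $K$ is compact, $\|T s(x)\|$ is bounded below by some $\varepsilon>0$ uniformly on $K$. Now $\tilde\gamma_{m_j}\to T$ in operator norm implies $\tilde\gamma_{m_j}s(x)\to Ts(x)$ uniformly on $K$ in $\C^{n+1}$, and since the target vectors stay uniformly bounded away from $0$, passing to projective classes via the (locally Lipschitz, on the relevant region) quotient map $\C^{n+1}\setminus\{0\}\to\CP^n$ yields $[\tilde\gamma_{m_j}s(x)]\to[Ts(x)]$ uniformly on $K$. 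Translating back, $\gamma_{m_j}\to\gamma_0$ uniformly on $K$, which is (ii).

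The main obstacle — really the only genuine subtlety — is the uniform lower bound $\|T s(x)\|\ge\varepsilon$ on $K$: this is where the hypothesis $K\cap[\ker\gamma_0]=\emptyset$ is essential and where one must be careful that $T\neq 0$ (guaranteed by the norm-one normalization) so that $[\ker T]$ is a proper subvariety and the complement is nonempty. Everything else is routine: compactness of a projective space of matrices for (i), and continuity of the projectivization map on $\C^{n+1}\setminus\{0\}$ together with uniform operator-norm convergence for (ii). I would also remark that this is precisely the statement proved in \cite{Cano2010}, so in the paper it suffices to cite it; the sketch above is included only to keep the exposition self-contained.
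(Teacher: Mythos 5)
Your proposal is correct and is essentially the standard argument behind this statement; note that the paper itself offers no proof at all here, only the citation to \cite{Cano2010}, so there is nothing in the text to diverge from. Part (i) as you present it (normalize lifts to norm one, use compactness of the unit sphere of $M(n+1,\C)$, observe the $\C^*$-ambiguity dies in the quotient) is exactly right. The one step in part (ii) that does not survive being taken literally is ``choose a continuous section $s\colon K\to\C^{n+1}\setminus\{0\}$'': the tautological bundle restricted to an arbitrary compact $K\subset\CP^n$ need not admit a global nonvanishing continuous section (e.g.\ if $K$ contains a projective line). This is cosmetic rather than fatal: replace $s(K)$ by the compact set $\hat K=\{v\in\C^{n+1}:\|v\|=1,\ [v]\in K\}$ and run your estimates for $v\in\hat K$ directly --- the lower bound $\|Tv\|\geq\varepsilon$ on $\hat K$ follows from compactness and $K\cap[\ker\gamma_0]=\emptyset$ exactly as you say, uniform convergence $\tilde\gamma_{m_j}v\to Tv$ on $\hat K$ is immediate from operator-norm convergence, and uniform continuity of the quotient map on the compact annulus $\{\varepsilon/2\leq\|v\|\leq 1\}$ gives uniform convergence of $[\tilde\gamma_{m_j}v]$ to $[Tv]$, which descends to the claimed uniform convergence on $K$. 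With that substitution your sketch is a complete and self-contained proof of the cited result.
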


\begin{Dfn}\label{Dfn:Myrberglimitset}
Let $\Gamma\subset \psl(n+1,\C),$ and let $\overline{\Gamma}$ denote the set of quasi-projective maps such that there exists a sequence on $\Gamma$ that converges to it. The \emph{Myrberg limit set} of $\Gamma,$ denoted by $\LMyr(\Gamma),$ is the set 
\begin{equation*}
\bigcup_{g\in\overline{\Gamma}} \ker (g).
\end{equation*}
\end{Dfn}

The Proposition \ref{Prop:EquicontinuityGeneral}, implies that for a sequence of different elements in $\psl(n+1,\C),$ the equicontinuous set of the sequence is given by the complement in $\CP^n$ of the kernel of the limit quasi-projective transformation, see \cite{Cano2010} for a detailed description. In particular, the complement of the Equicontinuity region correspond to the Myrberg limit set.

\subsection{The Kulkarni limit set}

The notion of Kulkarni limit set was introduced by R. Kulkarni \cite{Kulkarni1978} for groups acting on manifolds by diffeomorphism. Naively, the Kulkarni limit set removes by phases the undesirable kind of actions (accumulations) in order to have a properly discontinuous action on the complement. 

\begin{Dfn}\label{Dfn:KulkarniLimitSet}
 Let $\Gamma\subset\psl(n+1,\C)$ a discrete subgroup and consider the following subsets of $\CP^n$
 \begin{enumerate}[i.]
  \item $L_0(\Gamma)$ the closure of cluster points of infinite isotropy group.
  \item $L_1(\Gamma)$ the closure of cluster points of orbits of points in $\CP^n\setminus L_0(\Gamma).$
  \item $L_2(\Gamma)$ the closure of cluster points of orbits of compact subsets of $\CP^n\setminus(L_0(\Gamma)\cup L_1(\Gamma)).$
  \end{enumerate}
  The \emph{Kulkarni limit set} for $\Gamma$ is the union $L_0(\Gamma)\cup L_1(\Gamma)\cup L_2(\Gamma)$ and we will denoted by $\LKul(\Gamma).$ We will denote by $\OKul(\Gamma)$ to the complement of the Kulkarni limit set and it will be called, the Kulkarni discontinuity region.
\end{Dfn}

Although the definition of the  Kulkarni limit set guarantees that the action of the group on its complement is properly discontinuous, the computability of this limit set is really complicated and there is no general method to compute it. Even more, there are no precise relations between the $L_j(\cdot),$ see Chapter 7 of \cite{Cano2013} to some atypical relations on the limit set for some groups, also there are no monotony relations in any case.

\begin{Prp}
\label{Ch0:Prp:KulkarniProperties}
 Let $\Gamma$ be a complex Kleinian group. Then:
 \begin{enumerate}[i.]
  \item The set $\LKul(\Gamma)$ is a $\Gamma-$invariant closed set.
  \item The group $\Gamma$ acts properly discontinuously on $\OKul(\Gamma).$
  \item Let $\mathcal{C}\subset\CP^n$ a closed $\Gamma-$invariant set such that for every compact set $K\subset \CP^n\setminus \mathcal{C},$ the set of cluster points of $\Gamma K$ is contained in $(L_0(\gamma)\cup L_1(\Gamma))\cap\mathcal{C}.$ Then $\LKul(\Gamma)\subset \mathcal{C}.$
  \item The equicontinuity set of $\Gamma$ is contained in $\OKul(\Gamma).$
 \end{enumerate}
\end{Prp}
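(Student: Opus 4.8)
The four statements are the specialization to the action of $\psl(n+1,\C)$ on $\CP^n$ of Kulkarni's general construction \cite{Kulkarni1978}, so the plan is to reduce each of them to that setting. For (i), closedness is immediate since $\LKul(\Gamma)=L_0(\Gamma)\cup L_1(\Gamma)\cup L_2(\Gamma)$ is a finite union of sets each defined as a closure; for $\Gamma$-invariance it is enough, because a homeomorphism of $\CP^n$ commutes with closure, to check that every $\gamma\in\Gamma$ carries each of the three sets of cluster points (before closing up) into itself. For $L_0(\Gamma)$ this holds because $\gamma$ conjugates the isotropy group at $x$ to the isotropy group at $\gamma x$, hence permutes the points with infinite isotropy and carries the associated cluster sets onto one another; for $L_1(\Gamma)$ and $L_2(\Gamma)$ it holds because $\Gamma\cdot(\gamma x)=\Gamma\cdot x$ (resp.\ $\Gamma\cdot(\gamma K)=\Gamma\cdot K$) and because $\CP^n\setminus L_0(\Gamma)$ (resp.\ $\CP^n\setminus(L_0(\Gamma)\cup L_1(\Gamma))$) is already $\Gamma$-invariant.

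For (ii) I would reproduce Kulkarni's argument. Let $K\subset\OKul(\Gamma)$ be compact and suppose there are infinitely many distinct $\gamma_m\in\Gamma$ with $\gamma_mK\cap K\neq\emptyset$; pick $x_m\in K$ with $\gamma_mx_m\in K$ and, after passing to subsequences, assume $x_m\to x\in K$ and $\gamma_mx_m\to y\in K$. Since $K$ is disjoint from $L_0(\Gamma)\cup L_1(\Gamma)$, it is itself a compact subset of $\CP^n\setminus(L_0(\Gamma)\cup L_1(\Gamma))$, so by the very definition of $L_2(\Gamma)$ every cluster point of $\Gamma K$ lies in $L_2(\Gamma)$; if the points $\gamma_mx_m$ are not eventually a single value this gives $y\in L_2(\Gamma)$, contradicting $y\in K\subset\OKul(\Gamma)$. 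If they are eventually a single value $y$, then the points $x_m=\gamma_m^{-1}y$ either form an infinite subset of the orbit $\Gamma y$ converging to $x$ — whence $x\in L_1(\Gamma)$, since $y\notin L_0(\Gamma)$ because $y\in\OKul(\Gamma)$ — or some point of $K$ has infinite isotropy and hence lies in $L_0(\Gamma)$; both contradict $K\subset\OKul(\Gamma)$.

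For (iii) the plan is a three step induction: prove $L_0(\Gamma)\subseteq\mathcal C$, then $L_1(\Gamma)\subseteq\mathcal C$, then $L_2(\Gamma)\subseteq\mathcal C$. At each step, since $\mathcal C$ is closed, it suffices to place inside $\mathcal C$ the points generating $L_j(\Gamma)$ before the closure, and for such a point one distinguishes whether the \emph{seed} producing it — a point of infinite isotropy for $L_0$, a single point for $L_1$, a compact set for $L_2$ — meets $\mathcal C$ or not. If it meets $\mathcal C$, then by $\Gamma$-invariance the portion of the orbit it generates stays in $\mathcal C$, and since $\mathcal C$ is closed so do all of its cluster points, a contradiction. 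If it avoids $\mathcal C$, a compact neighbourhood of it (or the seed itself, when it is already compact) is a compact subset of $\CP^n\setminus\mathcal C$, so the hypothesis forces the cluster points of its orbit into $(L_0(\Gamma)\cup L_1(\Gamma))\cap\mathcal C\subseteq\mathcal C$, again a contradiction. \textbf{The one genuinely delicate point, which I expect to be the main obstacle}, is the $L_2(\Gamma)$ step: a cluster point of $\Gamma K$ can be produced only by a sequence $\gamma_mx_m$ with $x_m$ escaping toward $\partial\mathcal C$, so that no single compact subset of $\CP^n\setminus\mathcal C$ detects it. To handle this one passes, via Proposition \ref{Prop:EquicontinuityGeneral}, to a quasi-projective limit $g$ of $(\gamma_m)$ and to the limit point $x^{*}\in\partial\mathcal C\subset\mathcal C$ of $(x_m)$: as long as $x^{*}$ lies off the kernel of $g$, the escaping sequence and the orbit $\Gamma x^{*}$ have the same cluster value $g(x^{*})\in\overline{\Gamma x^{*}}\subseteq\mathcal C$; the residual case $x^{*}\in[\ker g]$ is disposed of as in \cite[Chapter 7]{Cano2013}.

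Finally, (iv) I would deduce from (iii) applied to $\mathcal C:=\CP^n\setminus\Eq(\Gamma)$, which is closed, $\Gamma$-invariant, and equal to $\LMyr(\Gamma)=\bigcup_{g\in\overline{\Gamma}}\ker(g)$ by the paragraph following Definition \ref{Dfn:Myrberglimitset}. The two things to check are that $L_0(\Gamma)\subseteq\mathcal C$ and that cluster points of orbits of compact subsets of $\Eq(\Gamma)$ land in $(L_0(\Gamma)\cup L_1(\Gamma))\cap\mathcal C$. Both rest on the following observation: if $\gamma_m\to g$ in $\Ps(n+1,\C)$ along a subsequence with the $\gamma_m$ distinct, then discreteness of $\Gamma$ forces $\ker g\neq 0$, and choosing normalized matrices $A_m\to T$ for $\gamma_m$ and $B_m\to S$ for $\gamma_m^{-1}$ one has $B_mA_m=\lambda_mI$ with $|\lambda_m|\le1$, so in the limit $ST=\lambda I$; since $\lambda\neq0$ would make $T$ invertible, $\lambda=0$, that is $\im T\subseteq\ker S$, i.e.\ $\im(g)\subseteq[\ker g']\subseteq\LMyr(\Gamma)$, where $g'$ is the quasi-projective limit of $(\gamma_m^{-1})$. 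For $L_0(\Gamma)\subseteq\mathcal C$: a point $x$ of infinite isotropy, with $\gamma_m$ distinct in its isotropy group and $\gamma_m\to g$, satisfies $\gamma_mx=x$, hence $x\in[\ker g]\cup\im(g)\subseteq\LMyr(\Gamma)$, so $x\notin\Eq(\Gamma)$; taking closures stays in the closed set $\CP^n\setminus\Eq(\Gamma)$. For the second point, if $K\subset\Eq(\Gamma)$ is compact then $K$ is disjoint from $[\ker g]$ for every $g\in\overline{\Gamma}$ (as $[\ker g]\subseteq\LMyr(\Gamma)$), so Proposition \ref{Prop:EquicontinuityGeneral} shows every cluster point of $\Gamma K$ equals $g(x)$ for some $g\in\overline{\Gamma}$ and $x\in K$, with no degeneracy; then $g(x)\in\im(g)\subseteq\LMyr(\Gamma)=\mathcal C$, and $g(x)$ is an honest accumulation point of $\Gamma x$ with $x\in\Eq(\Gamma)\subseteq\CP^n\setminus L_0(\Gamma)$, hence $g(x)\in L_1(\Gamma)$. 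By (iii) this yields $\LKul(\Gamma)\subseteq\CP^n\setminus\Eq(\Gamma)$, equivalently $\Eq(\Gamma)\subseteq\OKul(\Gamma)$.
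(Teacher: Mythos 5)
The paper itself states this proposition without proof (it is Kulkarni's classical result, quoted from \cite{Kulkarni1978} and Chapter~3 of \cite{Cano2013}), so your attempt has to be measured against the standard argument. Items (i) and (ii) are correct and essentially the canonical proofs. Item (iv) is a correct and rather clean reduction to (iii): the computation with normalized lifts ($B_mA_m=\lambda_m I$, $\lambda=0$ by discreteness, hence $\im(g)\subseteq[\ker g']\subseteq\LMyr(\Gamma)$) is right, and it is exactly what is needed to verify the hypothesis of (iii) for $\mathcal C=\CP^n\setminus\Eq(\Gamma)$. The problem is in (iii), at precisely the point you flagged.

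In the $L_2$ step you must show that every cluster point $y$ of $\Gamma K$, with $K\subset\CP^n\setminus(L_0(\Gamma)\cup L_1(\Gamma))$ compact, lies in $\mathcal C$; the troublesome configuration is $x_m\in K\setminus\mathcal C$ with $x_m\to x^{*}\in\mathcal C\cap[\ker g]$. Deferring this to ``Chapter 7 of \cite{Cano2013}'' does not close it: when $x^{*}\in[\ker g]$ the quasi-projective convergence of Proposition \ref{Prop:EquicontinuityGeneral} gives no control on $\lim\gamma_m x_m$ (the limit depends on the relative rates at which $x_m$ approaches $[\ker g]$ and $\gamma_m$ degenerates), and nothing in the hypothesis relates $[\ker g]$ to $\mathcal C$, which is an \emph{arbitrary} closed invariant set, not $\LMyr(\Gamma)$. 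The standard way to dispose of this case avoids quasi-projective limits entirely and is a reflection of your own neighbourhood argument to the target side: suppose $y\notin\mathcal C$ and choose a compact neighbourhood $N$ of $y$ with $N\cap\mathcal C=\emptyset$; for large $m$ one has $\gamma_m x_m\in N$, hence $x_m=\gamma_m^{-1}(\gamma_m x_m)\in\Gamma N$ with the $\gamma_m^{-1}$ distinct, so $x^{*}=\lim x_m$ is a cluster point of $\Gamma N$ and the hypothesis forces $x^{*}\in(L_0(\Gamma)\cup L_1(\Gamma))\cap\mathcal C$, contradicting $x^{*}\in K$. With that replacement the whole of (iii) is purely topological (no case distinction on $[\ker g]$ is needed), and your quasi-projective machinery is then used only where it genuinely belongs, namely in (iv).
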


We present two examples of Kulkarni limit set for complex Kleinian groups in the next results, we have to mention that the techniques to prove them are different but in the same spirit the authors use the existence of the Equicontinuity region and its properties. 

\begin{Thm}[Theorem 0.1, \cite{Cano2017}]
Let $\Gamma\subset \pu(n,1)$ be a discrete subgroup; then, the Kulkarni limit set of $\Gamma$ can be described as the hyperplanes tangent to $\partial \Hy^n_\C$ at points in the Chen-Greenberg limit set of $\Gamma$, that is
\begin{equation}
\LKul(\Gamma)=\bigcup_{p\in\Lambda_{CG}(\Gamma)} p^\perp.
\end{equation}
Moreover, if $\Gamma$ is non-elementary, then $\OKul(\Gamma)$ agrees with the equicontinuity set of $\Gamma$ and is the largest open set on which the group acts properly and discontinuously. 
\end{Thm}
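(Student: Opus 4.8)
The plan is to identify $\CP^n\setminus\Eq(\Gamma)$ with the union of polar hyperplanes $\bigcup_{p\in\Lambda_{CG}(\Gamma)}p^{\perp}$, to sandwich $\LKul(\Gamma)$ between this set and itself, and then to obtain maximality by showing that, when $\Gamma$ is non-elementary, each of these hyperplanes obstructs proper discontinuity everywhere along it. Throughout I fix the Hermitian form of signature $(n,1)$, so that $\CP^n=\Hy^n_{\C}\sqcup\partial\Hy^n_{\C}\sqcup E$ with $E$ the set of positive lines, and attach to each $w\in\CP^n$ its polar hyperplane $w^{\perp}=\{[z]:\langle z,w\rangle=0\}$. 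Polarity is $\pu(n,1)$-equivariant, i.e. $\gamma\,w^{\perp}=(\gamma w)^{\perp}$, and for $p\in\partial\Hy^n_{\C}$ the hyperplane $p^{\perp}$ is tangent to the sphere, meeting $\overline{\Hy^n_{\C}}$ only at $p$, so that $p^{\perp}\setminus\{p\}\subset E$; since $\Lambda_{CG}(\Gamma)$ is compact and $\Gamma$-invariant, $\bigcup_{p\in\Lambda_{CG}(\Gamma)}p^{\perp}$ is a closed $\Gamma$-invariant subset of $\CP^n$.

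\emph{The equicontinuity region.} By Proposition~\ref{Prop:EquicontinuityGeneral} and Definition~\ref{Dfn:Myrberglimitset}, $\CP^n\setminus\Eq(\Gamma)=\LMyr(\Gamma)=\bigcup_{g\in\overline{\Gamma}}\ker g$, so it suffices to determine $\overline{\Gamma}$. Using the Cartan ($KAK$) decomposition of $\pu(n,1)$, any sequence of distinct elements escaping to infinity is, after passing to a subsequence, of the form $\gamma_m=k_m\,t_m\,k_m'$ with $k_m\to k$ and $k_m'\to k'$ in the maximal compact and $t_m$ a translation of length tending to $\infty$ along a fixed geodesic with endpoints $\xi^{\pm}$; then the associated quasi-projective limit has image the single point $k\xi^{+}$ and kernel $\bigl((k')^{-1}\xi^{-}\bigr)^{\perp}$, and both of these points lie in $\Lambda_{CG}(\Gamma)$, being accumulation points of $\gamma_m^{\mp1}$ applied to an interior point. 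Conversely, every $p\in\Lambda_{CG}(\Gamma)$ occurs as such a kernel: the elementary case is checked directly, and for non-elementary $\Gamma$ one uses density of loxodromic fixed points in $\Lambda_{CG}(\Gamma)$ and passes to the inverses of a sequence that realises $p$ as an attracting accumulation point. Hence $\CP^n\setminus\Eq(\Gamma)=\bigcup_{p\in\Lambda_{CG}(\Gamma)}p^{\perp}$, and one inclusion of the theorem comes for free: Proposition~\ref{Ch0:Prp:KulkarniProperties}(iv) gives $\Eq(\Gamma)\subseteq\OKul(\Gamma)$, so $\LKul(\Gamma)\subseteq\CP^n\setminus\Eq(\Gamma)=\bigcup_{p\in\Lambda_{CG}(\Gamma)}p^{\perp}$.

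\emph{The reverse inclusion.} A generic interior point avoids $L_0(\Gamma)$ and has orbit accumulating exactly on $\Lambda_{CG}(\Gamma)$, so $\Lambda_{CG}(\Gamma)\subseteq L_1(\Gamma)$. To capture the exterior part $p^{\perp}\setminus\{p\}$ of a polar hyperplane I would use $L_2$: pick a point $k_0\in p^{\perp}\setminus\{p\}$ lying outside $L_0(\Gamma)\cup L_1(\Gamma)$ — possible because that union meets $p^{\perp}$ only in a proper closed subset — a small ball $K$ around $k_0$, and an escaping sequence $\gamma_m$ whose repelling hyperplane is $p^{\perp}$ and whose attracting point is a prescribed limit point $p'$. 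Points of $K$ approaching $p^{\perp}$ transversally at the rate at which $\gamma_m$ expands there are carried by $\gamma_m$ onto all of $(p')^{\perp}$, so $(p')^{\perp}$ lies in the cluster set of $\Gamma K$, hence in $L_2(\Gamma)$. Letting $p'$ range over $\Lambda_{CG}(\Gamma)$ and using that $\LKul(\Gamma)$ is closed with $\Lambda_{CG}(\Gamma)$ compact yields $\bigcup_{p\in\Lambda_{CG}(\Gamma)}p^{\perp}\subseteq\LKul(\Gamma)$, and therefore the equality $\LKul(\Gamma)=\bigcup_{p\in\Lambda_{CG}(\Gamma)}p^{\perp}$.

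\emph{Maximality for non-elementary $\Gamma$.} Combining the identifications above gives $\OKul(\Gamma)=\CP^n\setminus\bigcup_{p\in\Lambda_{CG}(\Gamma)}p^{\perp}=\Eq(\Gamma)$. Now let $\Omega$ be any $\Gamma$-invariant open set on which $\Gamma$ acts properly discontinuously, and suppose some $z\in\Omega$ lies on $p^{\perp}$ with $p\in\Lambda_{CG}(\Gamma)$. Since $\Gamma$ is non-elementary, $\Lambda_{CG}(\Gamma)$ is perfect and $\Gamma$ acts minimally on it, so for $z$ in a dense subset of $p^{\perp}$ the hyperplane $z^{\perp}$ contains a second limit point $p'$, and there is an escaping sequence $\gamma_m$ with repelling hyperplane $p^{\perp}$ and attracting point $p'$. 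Then $\gamma_m(p^{\perp})=(\gamma_m p)^{\perp}$ converges to $(p')^{\perp}\ni z$, so the $\gamma_m$-images of a compact neighbourhood of $z$ in $\Omega$ meet that neighbourhood for infinitely many $m$, contradicting proper discontinuity; hence $\Omega$ is disjoint from a dense, and therefore (as $\CP^n\setminus\Omega$ is closed) from all, of $\bigcup_{p\in\Lambda_{CG}(\Gamma)}p^{\perp}$, i.e. $\Omega\subseteq\OKul(\Gamma)$. I expect the crux to be the two ``blow-up'' steps: one must understand $L_0(\Gamma),L_1(\Gamma),L_2(\Gamma)$ finely enough to see that the \emph{whole} polar hyperplane of a limit point — not merely $\Lambda_{CG}(\Gamma)$ together with the constant maps produced by a single escaping sequence — lies in $\LKul(\Gamma)$, via the orbits of compact sets meeting a repelling hyperplane, and to run the parallel recurrence argument for maximality; reducible non-elementary groups, whose limit set is confined to a chain, are the delicate case at the density step.
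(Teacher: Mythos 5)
A preliminary caveat: this theorem is not proved in the paper under review; it is quoted verbatim from \cite{Cano2017} as background for the main results, so there is no in-paper proof to compare your attempt against. I can only judge your sketch on its own terms, against what such a proof must contain.

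Your architecture is the standard one and is essentially sound: identify $\CP^n\setminus\Eq(\Gamma)$ with $\bigcup_{p\in\Lambda_{CG}(\Gamma)}p^\perp$ via the Cartan decomposition of divergent sequences (in rank one the quasi-projective limit has rank one, image an attracting point of $\Lambda_{CG}(\Gamma)$, kernel the polar hyperplane of the repelling point), obtain $\LKul(\Gamma)\subseteq\bigcup_{p}p^\perp$ for free from Proposition \ref{Ch0:Prp:KulkarniProperties}(iv), and recover the reverse inclusion by blowing up compact sets that meet a repelling hyperplane transversally, whose images sweep out the whole polar hyperplane of the attractor and hence land in $L_2(\Gamma)$. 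That second-order computation is correct, and the maximality argument by recurrence of translated hyperplanes is the right idea.

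The genuine gap is the parabolic case, which is exactly the difficulty the surrounding paper sidesteps by restricting to convex-cocompact groups. The displayed equality is asserted for \emph{every} discrete $\Gamma\subset\pu(n,1)$, but your $L_2$ blow-up uses a divergent sequence whose attracting point lies off its repelling hyperplane (loxodromic type in the sense of Definition \ref{Dfn:DivergentSequenceType}). When $\Gamma$ contains parabolics --- in particular for an elementary parabolic group, where $\Lambda_{CG}(\Gamma)=\{p\}$ and the claim is $\LKul(\Gamma)=p^\perp$ --- the quasi-projective limit $g$ satisfies $\im(g)\in\ker(g)$, the expansion transverse to $\ker(g)$ feeds back into the same hyperplane, and a separate, finer asymptotic analysis of $\gamma^m$ for $\gamma$ parabolic is needed; ``the elementary case is checked directly'' does not discharge this, and your reverse-inclusion argument does not cover non-elementary groups with cusps either. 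Two secondary, fixable gaps: the existence, for non-elementary $\Gamma$, of divergent sequences with independently prescribed attracting point and repelling hyperplane (used both in the $L_2$ step and in the maximality step) requires the minimality of the action on $\Lambda_{CG}(\Gamma)$ plus a short composition argument, and should be stated as a lemma; and the claim that $L_0(\Gamma)\cup L_1(\Gamma)$ meets each $p^\perp$ in a proper closed subset is unjustified as written, although the argument can be rerouted, since if that union already contains the hyperplane there is nothing to prove.
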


\begin{Thm}[Theorem 6.3.6, \cite{Cano2013}]
Let $\Gamma\subset \psl(3,\C)$ be an infinite discrete subgroup without fixed points nor invariant projective lines. Let $\mathcal{E}(\Gamma)$ be the set of all projective lines $\ell$ for which there exists and element $\gamma\in\Gamma$ such that $\ell\subset \LKul(\gamma).$ Then one has:
\begin{enumerate}[i.]
\item Then $\Eq(\Gamma)=\OKul(\Gamma),$ is the maximal open set on which $\Gamma$ acts properly discontinuously. Moreover, if $\mathcal{E}(\Gamma)$ contains more than three projective lines, then every connected component of $\OKul(\Gamma)$ is complete Kobayashi hyperbolic.

\item The set 
\begin{equation}
\LKul(\Gamma)= \overline{\bigcup_{\ell\in\mathcal{E}(\Gamma)}\ell}
\end{equation}
is path-connected. 
\item If $\mathcal{E}(\Gamma)$ contains more than three projective lines, then $\overline{\mathcal{E}(\Gamma)}\subset (\CP^2)^*$ is a perfect set, and it is the minimal closed $\Gamma-$invariant subset of $(\CP^2)^*.$
\end{enumerate}
\end{Thm}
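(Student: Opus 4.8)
The plan is to read everything off the quasi-projective limits of sequences in $\Gamma$, using two facts about $\psl(3,\C)$: kernels and images of non-invertible linear maps on $\C^3$ are points or projective lines, and $\CP^2\setminus\Eq(\Gamma)$ equals the Myrberg limit set $\LMyr(\Gamma)=\bigcup_{g\in\ol\Gamma}[\ker g]$ (Proposition \ref{Prop:EquicontinuityGeneral} and the discussion after Definition \ref{Dfn:Myrberglimitset}). The workhorse is a \emph{companion-subspace lemma}: if $\gamma_m\to g$ in $\Ps(3,\C)$, then after passing to a subsequence $\gamma_m^{-1}\to h\in\ol\Gamma$ with $\im g\subseteq[\ker h]$ and $\im h\subseteq[\ker g]$; this is immediate from the $KAK$-decomposition $\gamma_m=k_mD_mk_m'$, $k_m,k_m'\in\pu(3)$, $D_m$ diagonal with non-increasing moduli, and applied to the powers of a single infinite-order $\gamma\in\Gamma$ it recovers the description of $\LKul(\gamma)$ as the union of the one or two projective lines spanned by the attracting and repelling flags of $\gamma$ — exactly the members of $\mathcal E(\Gamma)$. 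Writing $\Sigma=\ol{\bigcup_{\ell\in\mathcal E(\Gamma)}\ell}$, I would then prove the chain $\Sigma\subseteq\LKul(\Gamma)\subseteq\LMyr(\Gamma)\subseteq\Sigma$. The first inclusion holds because each point of a line $\ell\subseteq\LKul(\gamma)$ is a point of infinite $\langle\gamma\rangle$-isotropy, or a cluster point of a $\langle\gamma\rangle$-orbit of a point, or, after shrinking, of a compact set missing $L_0\cup L_1$, so $\ell\subseteq\LKul(\Gamma)$ and one closes up. The second follows from Proposition \ref{Ch0:Prp:KulkarniProperties}(iii) with $\mathcal C=\LMyr(\Gamma)$: for a compact $K\subset\Eq(\Gamma)$ a subsequence of $\Gamma$ converges on $K$ to some $g\in\ol\Gamma$ with $K\cap[\ker g]=\emptyset$, so cluster points of $\Gamma K$ lie on $\im g$, and testing on single points plus the companion lemma give $\im g\subseteq(L_0(\Gamma)\cup L_1(\Gamma))\cap\LMyr(\Gamma)$. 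The third holds because, for $\gamma_m\to g$, the lines $\LKul(\gamma_m)\in\mathcal E(\Gamma)$ converge in the Grassmannian to lines carrying $[\ker g]$ and $\im g$. The chain yields the equality in (ii) and, equivalently, $\OKul(\Gamma)=\Eq(\Gamma)$; moreover $\Sigma$ is path-connected, since $\Sigma=\bigcup_{\ell\in\ol{\mathcal E(\Gamma)}}\ell$ (the point–line incidence set is closed) and any two of its points are joined along the two lines containing them through their necessarily nonempty intersection.

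For the rest of (i), I would show $\OKul(\Gamma)=\Eq(\Gamma)$ is the \emph{maximal} region of proper discontinuity: if an open $U$ carried a properly discontinuous $\Gamma$-action but met $\LMyr(\Gamma)$, choosing $x\in U\cap[\ker g]$ with $\gamma_m\to g$ and a small closed ball $\ol B\subset U$ about $x$, the companion sequence $\gamma_m^{-1}$ spreads $\ol B$ across a whole projective line, producing infinitely many $\gamma\in\Gamma$ with $\gamma(\ol B)\cap\ol B\neq\emptyset$ — a contradiction, so $U\subseteq\Eq(\Gamma)$. For Kobayashi hyperbolicity when $\mathcal E(\Gamma)$ has more than three lines: since $\Gamma$ fixes no point and no line, $\mathcal E(\Gamma)$ is neither concurrent nor contained in a pencil, so four of its lines are in general position; each connected component of $\OKul(\Gamma)$ then lies in $\CP^2$ minus four lines in general position, which is complete Kobayashi hyperbolic and hyperbolically embedded in $\CP^2$, and completeness descends to the relatively closed components.

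For (iii), closedness of $\ol{\mathcal E(\Gamma)}\subset(\CP^2)^*$ is automatic. For minimality, if $\mathcal F\subseteq\ol{\mathcal E(\Gamma)}$ is nonempty, closed and $\Gamma$-invariant, then $\bigcup_{\ell\in\mathcal F}\ell$ is a closed $\Gamma$-invariant subset of $\CP^2$ satisfying the hypothesis of Proposition \ref{Ch0:Prp:KulkarniProperties}(iii) (by the verification above, cluster sets of orbits of compacta are carried by images of quasi-projective limits, hence by these lines), so it contains $\LKul(\Gamma)=\Sigma$; dualising forces $\mathcal F=\ol{\mathcal E(\Gamma)}$. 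Perfectness then follows from minimality: an isolated line $\ell_0\in\ol{\mathcal E(\Gamma)}$ would make $\ol{\mathcal E(\Gamma)}\setminus\{\ell_0\}$ a strictly smaller nonempty closed $\Gamma$-invariant set — with more than three lines there are loxodromic elements of $\Gamma$ whose attracting and repelling lines accumulate onto all of $\ol{\mathcal E(\Gamma)}$, so removing one point leaves it closed and invariant — contradicting minimality.

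The main obstacle is the fine control of the third Kulkarni stratum: one must rule out that the cluster set of the $\Gamma$-orbit of a \emph{compact} set escapes the union of the lines built from kernels and images of quasi-projective limits, i.e. that no genuinely new accumulation appears beyond the point-orbit stage. This is precisely what the companion-subspace lemma and the second-order behaviour of $\llP T\rrP$ near $[\ker T]$ are meant to handle, and it is also the engine behind the maximality assertion in (i). A secondary but necessary ingredient, used throughout, is that for groups with no fixed point and no invariant line $\LKul(\Gamma)$ is genuinely a union of full projective lines rather than, say, a line with an extra point; this comes from a ping-pong / general-position argument exploiting the absence of invariant flags.
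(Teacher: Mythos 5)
First, a caveat: this statement is quoted in the paper as Theorem 6.3.6 of \cite{Cano2013} and is not proved there, so there is no in-paper argument to compare yours against; I can only assess your outline on its own terms.

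Your outline identifies the right objects (quasi-projective limits, the KAK decomposition, duality in $(\CP^2)^*$, and Kobayashi hyperbolicity of the complement of four lines in general position), but it has genuine gaps at precisely the load-bearing steps. First, the inclusion $\Sigma\subseteq\LKul(\Gamma)$ cannot be obtained by the monotonicity-style argument you give. The strata $L_1$ and $L_2$ are defined relative to the group's own $L_0$ and $L_0\cup L_1$: a point whose $\langle\gamma\rangle$-orbit clusters onto $\ell$ may itself lie in $L_0(\Gamma)\cup L_1(\Gamma)$, in which case it is not an admissible test point for $L_1(\Gamma)$ or $L_2(\Gamma)$, and its cluster points need not enter $\LKul(\Gamma)$. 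The paper explicitly warns that ``there are no monotony relations'' among the $L_j$ and points to Chapter 7 of \cite{Cano2013} for atypical examples, so $\LKul(\langle\gamma\rangle)\subseteq\LKul(\Gamma)$ is exactly the kind of statement that needs the hypotheses (no fixed point, no invariant line) and a real argument, not a stratum-by-stratum transfer. Second, the inclusion $\LMyr(\Gamma)\subseteq\Sigma$ rests on your claim that for a divergent sequence $\gamma_m\to g$ the lines of $\LKul(\gamma_m)$ converge to lines carrying $[\ker g]$ and $\im g$. But the lines in $\LKul(\gamma_m)$ are built from the eigenvector flags of the individual elements (and $\LKul(\gamma_m)$ is empty when $\gamma_m$ is elliptic of finite order), whereas $[\ker g]$ and $\im g$ come from the singular-value flags of the sequence; relating the two is the substantive analytic content of the theorem --- this is where the $\lambda$-lemma and the classification of elements of $\psl(3,\C)$ are actually used in \cite{Cano2013} --- and it does not follow ``immediately'' from the KAK decomposition. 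Two further soft spots: your maximality argument only shows that $\gamma_m^{-1}(\ol{B})$ accumulates on the single point $\im h$, which need not meet $\ol{B}$ even though both lie on the line $[\ker g]$; and your perfectness argument must rule out that $\overline{\mathcal{E}(\Gamma)}$ is a finite union of orbits of isolated lines, which does not follow from minimality alone without an extra step. The overall architecture (the chain $\Sigma\subseteq\LKul(\Gamma)\subseteq\LMyr(\Gamma)\subseteq\Sigma$ together with normality on the complement of four lines in general position) is the standard one, but as written the two key inclusions are asserted rather than proved.
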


\begin{Rmk}
Notice that the Kulkarni limit set for discrete subgroups of $\psl(n+1,\C)$ contains at least one projective subspace. In particular, for $\psl(3,\C)$ the Kulkarni limit set is made of projective subspace of the same dimension. 
\end{Rmk}

\subsection{The Conze-Guivarc'h limit set}

The Conze-Guivarc'h limit set is inspired by the ideas of \emph{proximal transformations} introduced by Abels, Marguilis and Soifer sin \cite{Abels1995}.

\begin{Dfn}
 We say that a matrix in $\SL(n+1,\C)$ is proximal if it has a maximal norm eigenvalue. In the case of $\psl(n+1,\C)$ we say that an element is proximal if some lift is proximal.
\end{Dfn}

The proximal property in an element $\gamma$ implies that the eigenspace is one dimensional (thinking that $\gamma\in\SL(n+1,\C)$) and its complement is an hyperplane, even more, the transformation acts as a contraction whose limit point is the proper vector space and we will call this vector \emph{dominant vector}.

Recall that every proximal element of $\psl(n+1,\C)$ is loxodromic according to the classification given on \cite{Cano2017c}.

\begin{Dfn}\label{Dfn:ConzeGuivarchlimitset}
Let $\Gamma\subset \psl(n+1,\C)$ that contains proximal elements and whose action on $\CP^n$ is \emph{strongly irreducible,} i.e., there no exist any proper nonzero subspace of $\CP^n$ invariant under the action of a subgroup of finite index in $\Gamma.$ The \emph{Conze-Guivarc'h limit set} for $\Gamma$ is the closure of dominant vectors in $\CP^n.$
\end{Dfn}

We will say that a subgroup of $\psl(n+1,\C)$ satisfy the (CG)-property if the hypothesis of the previous Definition are valid. In \cite{Conze2000} these hypothesis are called (H1) and (H2).

The Conze-Guivarc'h limit set is only applicable to those discrete subsets of $\psl(n+1,\C)$ with proximal elements, but not every discrete subgroup contains proximal elements (see \cite{Barrera2021}). Besides the constrains of the Conze-Guivarc'h limit set, it can be redefined in order to be applicable to all possible discrete subgroups, work done on \cite{Barrera2018}, where the authors extend the Conze-Guivarc'h limit set and proved a duality with the Kulkarni limit set in the case of infinite discrete subgroups of $\psl(3,\C).$

\section{The Veronese Curve in $\CP^n$}\label{S:Irreducible}

Let $\psi_n: \CP^1\to \CP^n$ the map given by
\begin{equation}\label{Eqn:VeroneseMap}
[x:y]\mapsto \left[x^n:\binom{n}{1}x^{n-1}y:\cdots: \binom{n}{n-1}xy^{n-1}:y^n\right].
\end{equation}
The previous is a well defined embedding of $\CP^1$ into $\CP^n$ known as the \emph{Veronese embedding}; we will denote by $\V_n$ to the image of $\CP^1$ under $\psi_n,$ the set $\V_n$ is a normal rational algebraic curve called the \emph{Veronese curve.} The existence of set of points in general position will be necessary in the following.

\begin{Lmm}
 Every subset of $n+1$ different points in $\V_n$ is linearly independent.
\end{Lmm}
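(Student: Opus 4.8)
The plan is to lift the $n+1$ distinct points to $\C^{n+1}$ via the Veronese parametrization \eqref{Eqn:VeroneseMap} and show that the resulting $(n+1)\times(n+1)$ matrix is invertible, which is equivalent to linear independence of $n+1$ vectors in $\C^{n+1}$. Concretely, pick pairwise distinct points $[x_0:y_0],\dots,[x_n:y_n]$ in $\CP^1$; by \eqref{Eqn:VeroneseMap} they lift to the rows of the matrix $M=\big(\binom{n}{j}\,x_i^{\,n-j}y_i^{\,j}\big)_{0\le i,j\le n}$. Factoring the constant $\binom{n}{j}$ out of the $j$-th column gives
\begin{equation*}
\det M=\Big(\prod_{j=0}^{n}\binom{n}{j}\Big)\,\det\big(x_i^{\,n-j}y_i^{\,j}\big)_{0\le i,j\le n}.
\end{equation*}

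The heart of the matter is to identify the second determinant as a homogeneous (projective) Vandermonde determinant. I would argue that, viewed as a polynomial in the variables $x_i,y_i$, the determinant $\det\big(x_i^{\,n-j}y_i^{\,j}\big)$ is homogeneous of degree $n$ in each pair $(x_i,y_i)$ and vanishes whenever two of the pairs are proportional (two proportional rows); since each $x_iy_k-x_ky_i$ is irreducible and these are pairwise coprime, the product $\prod_{0\le i<k\le n}(x_iy_k-x_ky_i)$ divides the determinant, and a degree count (both sides have total degree $n(n+1)$) shows the two agree up to a nonzero multiplicative constant:
\begin{equation*}
\det\big(x_i^{\,n-j}y_i^{\,j}\big)_{0\le i,j\le n}=c\!\!\prod_{0\le i<k\le n}(x_iy_k-x_ky_i),\qquad c\in\C^*.
\end{equation*}
Equivalently, on the affine chart where all $y_i\neq0$ one substitutes $t_i=x_i/y_i$ and recovers, up to a sign and the factor $\prod_i y_i^{\,n}$, the classical Vandermonde determinant $\prod_{i<k}(t_i-t_k)$; the point $[1:0]$ is handled either by the bihomogeneous argument above or by a continuity argument, and in particular this shows $c\neq 0$.

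Finally, since the points are pairwise distinct in $\CP^1$, each factor $x_iy_k-x_ky_i$ is nonzero — this is exactly the condition $[x_i:y_i]\neq[x_k:y_k]$ — and the binomial coefficients $\binom{n}{j}$ are nonzero, so $\det M\neq0$ and the chosen lifts are linearly independent. The only slightly delicate step is the determinant factorization; phrasing it in the bihomogeneous form is what keeps it clean, since it sidesteps any special treatment of the point at infinity, and the precise value of $c$ is irrelevant for the conclusion.
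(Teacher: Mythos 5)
Your proof is correct and follows essentially the same Vandermonde idea as the paper, which writes the points as $\psi([1:t_j])$ and asserts that the resulting linear system in the coefficients has only the trivial solution. Your bihomogeneous formulation is in fact slightly more complete: the paper's normalization $[1:t_j]$ silently excludes the point $[0:1]$, whereas your factorization $\det\big(x_i^{\,n-j}y_i^{\,j}\big)=c\prod_{i<k}(x_iy_k-x_ky_i)$ treats all of $\CP^1$ uniformly and makes the nonvanishing of the determinant explicit.
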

\begin{proof}
 Let $\{p_j=\psi([1:t_j])\}_{j=0}^n$ with $t_j\neq t_k$ if $j\neq k.$ Let $\sum_{j=0}^{n}a_jp_j=0$ a linear combination, it follows from the form of the elements that the linear system with variables the ${a_j}'$s has the unique solution $0.$ Therefore $\{p_j\}_{j=0}^n$ is linearly independent.
\end{proof}

\begin{Prp}
 Every subset of $m>n+1$ different points of $\V_n$ is in general position.
\end{Prp}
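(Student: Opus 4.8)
The plan is to deduce the statement directly from the preceding Lemma. Recall that a finite collection of points in $\CP^n$ is in \emph{general position} when, for every $k$ with $1\le k\le n+1$, any $k$ of the points span a projective subspace of the maximal possible dimension $\min(k-1,n)$; for $k\le n+1$ this is exactly the statement that those $k$ points are linearly independent, and for $k>n+1$ it just says the points span all of $\CP^n$. So, given $m>n+1$ pairwise distinct points $p_1,\dots,p_m\in\V_n$, what must be checked is that every subset of at most $n+1$ of them is linearly independent.

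First I would treat subsets of size exactly $n+1$: any such $\{p_{i_0},\dots,p_{i_n}\}\subset\{p_1,\dots,p_m\}$ is itself a set of $n+1$ pairwise distinct points lying on $\V_n$, so the Lemma applies verbatim and yields that they are linearly independent. Next, for a subset $S$ of size $k<n+1$, since $m\ge n+1$ I can enlarge $S$ to a subset $S'\subset\{p_1,\dots,p_m\}$ of size $n+1$ of pairwise distinct points of $\V_n$; by the previous case $S'$ is linearly independent, and a subset of a linearly independent set is linearly independent, so $S$ is too. Finally, for a subset of size $k>n+1$, it contains $n+1$ linearly independent points and hence spans $\CP^n$. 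This exhausts all cases, so the $m$ points are in general position.

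There is no real obstacle here: the entire content is carried by the Lemma (itself resting on the Vandermonde-type nonvanishing for pairwise distinct parameters $t_j$), and the Proposition is a purely combinatorial consequence — one only has to unwind the definition of general position and observe that it is inherited by all subsets once it is known for $(n+1)$-element subsets. If one wishes to state the result for an arbitrary (possibly infinite) subset of $\V_n$, the same argument applies unchanged, since general position of such a set means precisely that every finite subset is in general position.
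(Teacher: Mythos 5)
Your proof is correct and takes essentially the same route as the paper, which simply states that the Proposition follows from the preceding Lemma; you have merely spelled out the routine unwinding of the definition of general position (reduction to $(n+1)$-element subsets, then passing to smaller and larger subsets) that the paper leaves implicit.
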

\begin{proof}
 The proof follows from the previous lemma.
\end{proof}

The Veronese curve is intimately related to the $\SL(2,\C)-$representations into $\SL(n+1,\C).$ Denote by $\varrho_n:\SL(2,\C)\to\SL(n+1,\C)$ the irreducible representation morphism, in abuse of the notation, we will still denote by $\varrho_n$ the group morphism induced by the natural projections of $\SL(\cdot,\C)$ into $\psl(\cdot,\C).$ The following are some of the properties that has the morphism $\varrho_n.$

\begin{Lmm}\label{Lmm:IrreducibleRepresentation}
Given a matrix $A\in\psl(2,\C)$ of the form 
\[A=\begin{bmatrix}
                                                          a & b \\
                                                          c & d
                                                         \end{bmatrix},
\] then the $m\times j$ entry of the matrix $\varrho_n(A)$ is of the form

\begin{equation}
  \label{Eq:MatrixRep}
\sum_{k=\delta_{j,m}}^{\Delta_{j,m}}\binom{n-m}{k}\binom{m}{j-k}a^{n-m-k}c^kb^{m-j+k}d^{j-k}
\end{equation}
where $\delta_{j,m}=\max\{j-m,n\}$ and $\Delta_{j,m}=\min\{j,n-m\}.$
\end{Lmm}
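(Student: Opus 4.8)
The plan is to realise $\varrho_n$ concretely as the $n$-th symmetric power of the standard representation and then expand. Up to conjugation the unique $(n+1)$-dimensional irreducible representation of $\SL(2,\C)$ is its action on the space of homogeneous polynomials of degree $n$ in two variables $x,y$, realised so that the Veronese embedding \eqref{Eqn:VeroneseMap} is equivariant; concretely $A=\begin{bmatrix} a & b \\ c & d\end{bmatrix}$ acts by the substitution $x\mapsto ax+cy$, $y\mapsto bx+dy$. Fix the monomial basis $e_0,\dots,e_n$ with $e_m=x^{n-m}y^m$. Then the entry of $\varrho_n(A)$ labelled by the pair $(m,j)$ is, by definition of the matrix of a linear map in this basis, the coefficient of $e_j$ in $\varrho_n(A)e_m=(ax+cy)^{n-m}(bx+dy)^m$.

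First I would expand the two factors by the binomial theorem,
\[
(ax+cy)^{n-m}=\sum_{k}\binom{n-m}{k}a^{n-m-k}c^{k}\,x^{n-m-k}y^{k},
\qquad
(bx+dy)^{m}=\sum_{l}\binom{m}{l}b^{m-l}d^{l}\,x^{m-l}y^{l},
\]
multiply the two sums, and collect the coefficient of $x^{n-j}y^{j}=e_j$. This monomial is produced exactly by the index pairs with $k+l=j$, so putting $l=j-k$ yields the coefficient $\sum_k\binom{n-m}{k}\binom{m}{j-k}a^{n-m-k}c^{k}b^{m-j+k}d^{j-k}$, which is exactly \eqref{Eq:MatrixRep}. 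It then remains to identify the range of summation: $\binom{n-m}{k}$ vanishes unless $0\le k\le n-m$ and $\binom{m}{j-k}$ vanishes unless $0\le j-k\le m$, i.e. $j-m\le k\le j$; intersecting the two conditions gives $\max\{0,j-m\}\le k\le\min\{j,n-m\}$, which are the bounds $\delta_{j,m}$ and $\Delta_{j,m}$ appearing in the statement.

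The argument is a term-by-term expansion, so there is essentially no obstacle; the only points that require care are bookkeeping ones — fixing the basis ordering and the placement of the entries $a,b,c,d$ so as to agree with the convention under which \eqref{Eqn:VeroneseMap} is equivariant (the opposite substitution $x\mapsto ax+by$, $y\mapsto cx+dy$ would merely interchange $b$ and $c$ in the result), and then correctly intersecting the two constraints on $k$ to read off the summation bounds.
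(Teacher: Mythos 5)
Your proposal is correct and is exactly the computation the paper's proof gestures at (the paper simply says the lemma ``follows from a direct computation, using the action of $\SL(2,\C)$ on the space of homogeneous polynomials of degree $n$ in two complex variables''), so you are filling in the same argument rather than taking a different route. One small point worth flagging: your derived lower bound $\max\{0,j-m\}$ is the correct one, and the statement's $\delta_{j,m}=\max\{j-m,n\}$ is evidently a typo for $\max\{j-m,0\}$, since otherwise the sum would be empty for almost all $(m,j)$.
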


\begin{proof}
It follows from a direct computation, using the action of $\SL(2,\C)$ into the space of homogeneous polinomials of degree n in two complex variables.
\end{proof}
\begin{Prp}
\label{Prop:TypePreserving}
 The morphism $\rho$ is type preserving, \emph{i.e.,} it sends loxodromic, parabolic and elliptic elements of $\psl(2,\C)$ into loxodromic, parabolic and elliptic elements of $\psl(n+1,\C).$ Even more if $\Gamma$ is a discrete subgroup of $\psl(2,\C),$ then $\rho(\Gamma)$ is a discrete subgroup of $\psl(n+1,\C).$
\end{Prp}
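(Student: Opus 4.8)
The plan is to use the explicit relationship between the irreducible representation $\varrho_n$ and the action of $\psl(2,\C)$ on homogeneous polynomials, together with the standard classification of elements of $\psl(m,\C)$ by the moduli of their eigenvalues. First I would recall that if $A \in \SL(2,\C)$ has eigenvalues $\mu, \mu^{-1}$, then $\varrho_n(A)$, being the action on the space of degree-$n$ homogeneous polynomials in two variables, has eigenvalues $\mu^{n}, \mu^{n-2}, \dots, \mu^{-n}$: this is immediate by conjugating $A$ to diagonal (or Jordan) form and noting that $\varrho_n$ is a group homomorphism, so the Jordan/diagonal structure is carried over. Concretely, if $A$ is diagonalizable then so is $\varrho_n(A)$ with the eigenvalues listed above; if $A$ is parabolic (a single Jordan block with eigenvalue $\pm 1$), then $\varrho_n(A)$ is a single Jordan block of size $n+1$ with eigenvalue $(\pm 1)^n$, hence unipotent up to sign.

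From this the type-preservation is a case check. An element of $\psl(2,\C)$ is elliptic iff it is diagonalizable with $|\mu| = 1$; then all eigenvalues $\mu^{n-2k}$ of $\varrho_n(A)$ have modulus $1$ and $\varrho_n(A)$ is diagonalizable, so it is elliptic in $\psl(n+1,\C)$. It is loxodromic iff it is diagonalizable with $|\mu| \neq 1$; then the eigenvalues $\mu^{n}, \dots, \mu^{-n}$ of $\varrho_n(A)$ do not all have the same modulus (indeed $|\mu^n| \neq |\mu^{n-2}|$), so $\varrho_n(A)$ is loxodromic. It is parabolic iff it is a nontrivial unipotent (up to sign); then $\varrho_n(A)$ is a nontrivial single Jordan block with eigenvalue of modulus $1$, hence parabolic in $\psl(n+1,\C)$. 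One should take a little care that passing to $\psl$ does not collapse a Jordan block — but scaling a nontrivial $(n+1)\times(n+1)$ Jordan block by a scalar cannot make it the identity, so parabolics stay parabolic; and for $n$ odd the sign ambiguity of the lift is harmless since we work in $\psl$.

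For discreteness: since $\varrho_n$ is a morphism of algebraic groups and, restricted to $\SL(2,\C)$, has finite (in fact trivial or order two) kernel, it is a proper map onto its image, so it does not introduce accumulation; more directly, if $\varrho_n(\Gamma)$ had a nontrivial sequence $\varrho_n(\gamma_k) \to \mathrm{Id}$ with $\gamma_k$ distinct, then by continuity of a local inverse of $\varrho_n$ near the identity (which exists because $d\varrho_n$ is injective, $\varrho_n$ being a faithful representation of the simple Lie algebra $\mathfrak{sl}(2,\C)$) the $\gamma_k$ would accumulate at the identity in $\psl(2,\C)$, contradicting discreteness of $\Gamma$. The main obstacle is the bookkeeping around the $\SL$-versus-$\psl$ passage and the Jordan-block behaviour under the representation; everything else is a direct translation of eigenvalue moduli through the known eigenvalue formula $\mu^{n-2k}$ for $\varrho_n$.
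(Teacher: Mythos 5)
Your proposal is correct and follows essentially the same route as the paper: reduce type-preservation to the diagonal and unipotent model matrices (equivalently, to the eigenvalues $\mu^{n-2k}$ and the Jordan structure of $\varrho_n(A)$), and obtain discreteness by pulling back a sequence $\varrho_n(\gamma_k)\to\mathrm{Id}$ to a sequence accumulating at the identity in $\psl(2,\C)$. You supply more of the details the paper leaves as ``a straight computation'' (the eigenvalue-modulus case check and the $\SL$-versus-$\psl$ bookkeeping), but the underlying argument is the same.
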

\begin{proof}
 For the part of type preserving, it will be sufficient to look at the image of the matrices 
 \[A=\begin{array}{ccc}
    \begin{bmatrix}
     \lambda & 0\\
     0 & \lambda^{-1}
    \end{bmatrix} & \mbox{and} & B=\begin{bmatrix}
    1 & 1\\
    0 & 1
    \end{bmatrix}
   \end{array},
\] a straight computation involving Equation (\ref{Eq:MatrixRep}) implies the claim.

For the second part of the proposition, suppose that $\Gamma<\psl(2,\C)$ a discrete subgroup but $\varrho_n(\Gamma)$ isn't discrete. So there is a sequence $(A_m)_{m\in\N}$ in $\Gamma$ such that $\varrho_n(A_m)\rightarrow Id_{n+1}$ as $m\rightarrow \infty,$ with 
\[A_m=\begin{bmatrix}
a_m & b_m \\
c_m & d_m
\end{bmatrix}.\]

From the expresion of $\varrho_n(A_m),$ we can imply that $A_m\rightarrow Id_2,$ this is a contradiction because $\Gamma$ is discrete. Therefore $\varrho_n(\Gamma)$ is discrete.
\end{proof}

\begin{Prp}
 \label{Prop:VerAutomorphism}
 The group of projective automorphisms of $\V_n$ is $\rho_n(\psl(2,\C)).$
\end{Prp}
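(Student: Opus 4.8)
The goal is to show that every projective automorphism of the Veronese curve $\V_n$ is induced by an element of $\varrho_n(\psl(2,\C))$; the reverse inclusion is immediate since $\varrho_n$ is a morphism of groups and each $\varrho_n(A)$ restricts on $\V_n$ to the automorphism $\psi_n\circ A\circ\psi_n^{-1}$. So the work is to prove that $\stab(\V_n)\subset\psl(n+1,\C)$, the subgroup of projective transformations preserving $\V_n$ setwise, is contained in $\varrho_n(\psl(2,\C))$.

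First I would fix $g\in\psl(n+1,\C)$ with $g(\V_n)=\V_n$. Restricting $g$ to $\V_n$ and transporting through the Veronese embedding $\psi_n$, we obtain a bijection $\phi=\psi_n^{-1}\circ g|_{\V_n}\circ\psi_n$ of $\CP^1$; the first step is to argue that $\phi$ is a projective transformation of $\CP^1$, i.e. $\phi\in\psl(2,\C)$. One clean way: $g$ is a homeomorphism of $\CP^n$ carrying the smooth rational curve $\V_n$ to itself, hence induces a biholomorphism of $\V_n\cong\CP^1$ with itself, and every biholomorphism of $\CP^1$ is a Möbius transformation. (Alternatively one can see directly that $g$ sends a parametrization by homogeneous degree-$n$ monomials to another such, forcing the induced reparametrization of the $[x:y]$-coordinate to be fractional linear.)

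Second, having $\phi\in\psl(2,\C)$, consider $h=g\cdot\varrho_n(\phi)^{-1}\in\psl(n+1,\C)$. By construction $h$ fixes $\V_n$ pointwise. The key step is then that a projective transformation of $\CP^n$ fixing $\V_n$ pointwise must be the identity: by the Lemma above, any $n+1$ distinct points of $\V_n$ are linearly independent, hence form a projective frame, and in fact by the Proposition any $m>n+1$ points of $\V_n$ are in general position, so $\V_n$ contains $n+2$ points in general position. A projective transformation of $\CP^n$ fixing $n+2$ points in general position is the identity. Therefore $h=\mathrm{id}$, i.e. $g=\varrho_n(\phi)\in\varrho_n(\psl(2,\C))$, which gives the inclusion $\stab(\V_n)\subset\varrho_n(\psl(2,\C))$ and completes the proof.

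The main obstacle is the first step — showing the induced self-map of $\CP^1$ is genuinely projective and not merely a homeomorphism or some wilder bijection. If one is content to work in the holomorphic category (the ambient $g$ is a projective linear map, hence algebraic, and $\V_n$ is a smooth rational curve, so $g|_{\V_n}$ is an automorphism of $\CP^1$ as an algebraic variety), this is just the classical fact $\mathrm{Aut}(\CP^1)=\psl(2,\C)$ and the argument is short; I would phrase it that way. The remaining steps are routine linear algebra using only the Lemma and Proposition already established about points of $\V_n$ in general position, so I do not expect difficulty there; the only care needed is to make sure one really has $n+2$ distinct points available on $\V_n$, which is clear since $\V_n$ is infinite.
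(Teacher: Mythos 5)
Your proposal is correct and follows essentially the same route as the paper's own proof: both arguments show the induced self-map of $\CP^1$ via $\psi_n^{-1}\circ g|_{\V_n}\circ\psi_n$ is holomorphic, hence lies in $\psl(2,\C)$, and then conclude that $g\cdot\varrho_n(\phi)^{-1}$ is the identity because it fixes $n+2$ points of $\V_n$ in general position. No substantive differences to report.
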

\begin{proof}
A straight computation give us that  $\varrho_n(A)\cdot\psi(p)=\psi_n(A\cdot p)\in \V_n,$ for every $A\in\psl(2,\C)$ and $p\in\CP^1.$ We can conclude that $\varrho_n(\psl(2,\C))$ leaves invariant the curve $\V_n.$ Let us suppose that there is an element $B\in \psl(n+1,\C)$ such that $B(\V_n)=\V_n.$ Denote by $\tilde{B}$ the map from $\CP^1$ into $\CP^1$ given by $\tilde{B}([z:w])=\psi_n^{-1}B\psi([z:w]).$ The map $\tilde{B}$ is an holomorphic map, so belongs to $\psl(2,\C);$ even more the following diagram commutes,

\[
\xymatrix{
\CP^1\ar[r]^{\tilde{B}} \ar[d]_{\psi_n} & \CP^1\ar[d]^{\psi_n} \\
 \V_n\ar[r]_B & \V_n }
\]

We can assure that $B|_{\V_n}=\varrho_n(\tilde{B})|_{\V_n}.$ Let us take $n+2$ different points of $\V_n$ in general position, the transformation $B\rho(\tilde{B})^{-1}$ fix the $n+2$ points, this implies that $B\rho(\tilde{B})^{-1}=Id_{n+1}$ in $\CP^n.$ So, the projective automorphisms of $\V$ is $\varrho_n(\psl(2,\C)).$
\end{proof}
\begin{Crl}
 The Veronese embedding $\psi_n$ is $\psl(2,\C)-$equivariant. Even more, for every $\Gamma\subset \psl(2,\C)$ group, $\psi_n$ is $\Gamma-$equivariant.
\end{Crl}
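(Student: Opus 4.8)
The plan is to note that the equivariance identity is essentially the ``straight computation'' already used at the start of the proof of Proposition~\ref{Prop:VerAutomorphism}, and to record it cleanly. Explicitly, saying that $\psi_n$ is $\psl(2,\C)$-equivariant (with respect to the irreducible representation $\varrho_n$) means
\[
\psi_n(A\cdot p)=\varrho_n(A)\cdot\psi_n(p)\qquad\text{for all }A\in\psl(2,\C),\ p\in\CP^1,
\]
and the $\Gamma$-equivariance for a subgroup $\Gamma\subset\psl(2,\C)$ is then the same statement with $A$ restricted to range over $\Gamma$, so it is an immediate consequence and requires no separate argument.

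To prove the displayed identity I would argue conceptually rather than by expanding the entries (\ref{Eq:MatrixRep}). Identify $\C^{n+1}$ with the space $\mc{P}_n$ of homogeneous polynomials of degree $n$ in two variables $x,y$ via the monomial basis $\{x^{n-k}y^{k}\}_{k=0}^{n}$, i.e. the basis with respect to which Lemma~\ref{Lmm:IrreducibleRepresentation} writes $\varrho_n$. Under this identification $\varrho_n$ is the $n$-th symmetric power of the standard representation of $\SL(2,\C)$ on $\C^2$; equivalently, regarding $\mc{P}_n\cong\mathrm{Sym}^n(\C^2)$, the operator $\varrho_n(A)$ is the map induced on $\mathrm{Sym}^n$ by $A$ acting on each tensor factor, and the Veronese map (\ref{Eqn:VeroneseMap}) is exactly $[v]\mapsto[v^{\otimes n}]$, the binomial coefficients in (\ref{Eqn:VeroneseMap}) being precisely those produced by writing $v^{\otimes n}$ with $v=(x,y)$ in the monomial basis of $\mathrm{Sym}^n(\C^2)$.

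With this description the identity is immediate: since $\varrho_n(A)$ acts diagonally on each tensor factor, $\varrho_n(A)(v^{\otimes n})=(Av)^{\otimes n}$, and projectivizing gives $\varrho_n(A)\cdot\psi_n([v])=\psi_n([Av])=\psi_n(A\cdot[v])$. The passage from $\SL(2,\C)$ to $\psl(2,\C)$ is harmless because rescaling $A$ by $\zeta\in\C^*$ rescales $\varrho_n(A)$ by $\zeta^{n}$, which is invisible in $\CP^n$. Restricting $A$ to $\Gamma$ yields the last assertion.

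The only thing that really needs checking --- and the nearest thing to an obstacle here --- is the bookkeeping of normalizations: one must confirm that the basis implicit in the definition of $\psi_n$ (the one carrying the binomial weights) is the same basis used to produce the matrix entries (\ref{Eq:MatrixRep}), so that the symbol $\varrho_n$ denotes literally the same operator on both sides of the identity. Once the conventions are aligned --- and they are, both being the monomial basis of $\mc{P}_n\cong\mathrm{Sym}^n(\C^2)$ --- there is no computation left; alternatively, one may simply substitute a general $A$ into (\ref{Eqn:VeroneseMap}) and (\ref{Eq:MatrixRep}) and match the two sides directly.
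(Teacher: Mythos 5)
Your argument is correct, and it reaches the identity $\varrho_n(A)\cdot\psi_n(p)=\psi_n(A\cdot p)$ that the corollary asserts; but it gets there by a genuinely different route than the paper. The paper treats the corollary as an immediate consequence of the first line of the proof of Proposition~\ref{Prop:VerAutomorphism}, where the identity is established by an unspecified ``straight computation'' --- i.e.\ by substituting a general $A$ into the explicit entries \eqref{Eq:MatrixRep} of Lemma~\ref{Lmm:IrreducibleRepresentation} and matching against \eqref{Eqn:VeroneseMap}, exactly the alternative you mention in your last sentence. You instead identify $\varrho_n$ with the $n$-th symmetric power of the standard representation on $\mathrm{Sym}^n(\C^2)\cong\mc{P}_n$ and $\psi_n$ with $[v]\mapsto[v^{\otimes n}]$, after which equivariance is the tautology $\varrho_n(A)(v^{\otimes n})=(Av)^{\otimes n}$. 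Your version buys an explanation of \emph{why} the binomial coefficients appear in \eqref{Eqn:VeroneseMap} (they are the coefficients of $v^{\otimes n}$ in the monomial basis) and eliminates the entry-by-entry check; its only cost is the normalization bookkeeping you correctly flag --- one must confirm that the basis underlying Lemma~\ref{Lmm:IrreducibleRepresentation} is the same monomial basis, and that the convention for the polynomial action (composition with $A$ versus $A^{-1}$) is the one making $\varrho_n$ literally $\mathrm{Sym}^n$ of the standard action rather than its contragredient; since the paper only specifies $\varrho_n$ up to ``the action on homogeneous polynomials,'' pinning this down is the one place where a reader must still consult \eqref{Eq:MatrixRep}. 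You also correctly observe that the second assertion ($\Gamma$-equivariance for any subgroup $\Gamma$) is a formal restriction of the first and that the $\SL$-versus-$\psl$ ambiguity disappears under projectivization.
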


  The previous Corollary implies that the action of subgroups of $\varrho_n(\psl(2,\C))$ on $\V_n$ is essentially the well know action of subgroups of $\psl(2,\C)$ in $\CP^1.$ Even more, if $z\in\CP^1$ belongs to the limit set of $\Gamma,$ then $\psi_n(z)$ is an accumulation point for the orbits of $\varrho_n(\Gamma)$ in $\CP^n.$

\begin{Crl}
\label{Crl:ContentionLimitSets}
 Let $\Gamma$ a discrete subgroup of $\psl(2,\C)$ and $\Lambda(\Gamma)$ its limit set. Then $\psi(\Lambda(\Gamma))$ is contained in $\Lambda(\rho(\Gamma)).$
\end{Crl}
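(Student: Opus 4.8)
The plan is to transfer the classical dynamics of $\Gamma$ on $\CP^1$ to the dynamics of $\varrho_n(\Gamma)$ on the Veronese curve $\V_n\subset\CP^n$ through the equivariant homeomorphism $\psi_n$, and then to pass to closures. Both ingredients are already at hand: by the preceding corollary $\psi_n$ is $\Gamma$-equivariant, i.e.\ $\varrho_n(\gamma)\cdot\psi_n(p)=\psi_n(\gamma\cdot p)$ for every $\gamma\in\Gamma$ and $p\in\CP^1$; and $\psi_n\colon\CP^1\to\V_n$ is a homeomorphism onto its image, being a holomorphic embedding.

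First I would show that $\psi_n$ carries accumulation points of $\Gamma$-orbits to accumulation points of $\varrho_n(\Gamma)$-orbits. Let $z\in\CP^1$ be an accumulation point of an orbit $\Gamma\cdot w$. Then there is a sequence of pairwise distinct elements $\gamma_m\in\Gamma$ with $\gamma_m\cdot w\to z$, and, $z$ being a genuine accumulation point, after passing to a subsequence we may assume the points $\gamma_m\cdot w$ are pairwise distinct. Applying $\psi_n$ and using continuity together with equivariance, $\varrho_n(\gamma_m)\cdot\psi_n(w)=\psi_n(\gamma_m\cdot w)$ is a sequence of pairwise distinct points of $\CP^n$ (injectivity of $\psi_n$) converging to $\psi_n(z)$. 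Hence $\psi_n(z)$ is an accumulation point of the orbit $\varrho_n(\Gamma)\cdot\psi_n(w)$, so $\psi_n(z)\in\Lambda(\varrho_n(\Gamma))$.

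Finally I would take the closure. Writing $A$ for the set of accumulation points of orbits of $\Gamma$, the previous step gives $\psi_n(A)\subset\Lambda(\varrho_n(\Gamma))$; since $\psi_n$ is continuous and $\Lambda(\varrho_n(\Gamma))$ is closed, it follows that $\psi_n(\Lambda(\Gamma))=\psi_n(\overline{A})\subset\overline{\psi_n(A)}\subset\Lambda(\varrho_n(\Gamma))$, which is the assertion.

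There is no substantial obstacle here; the argument is a soft transfer of dynamics through $\psi_n$. The only point requiring a moment's care is ensuring that the orbit points $\varrho_n(\gamma_m)\cdot\psi_n(w)$ do not collapse to a single value, so that $\psi_n(z)$ is a true accumulation point rather than an isolated point of the orbit; this is guaranteed by the injectivity of $\psi_n$, and alternatively one may invoke that $\varrho_n$ is injective on $\psl(2,\C)$ and that $\varrho_n(\Gamma)$ is infinite and discrete by Proposition \ref{Prop:TypePreserving}.
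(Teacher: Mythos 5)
Your argument is correct and is essentially the paper's own reasoning: the paper derives this corollary directly from the equivariance of $\psi_n$ (the preceding corollary), observing that accumulation points of $\Gamma$-orbits in $\CP^1$ are carried by the continuous equivariant embedding to accumulation points of $\varrho_n(\Gamma)$-orbits on $\V_n$, and then passing to closures. Your additional care about the orbit points not collapsing is a reasonable explicit check, but there is no divergence in method.
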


\begin{Dfn}
\label{Dfn:VeroneseGroup}
 A Veronese group is the image under $\varrho_n$ of a discrete subgroup of $\psl(2,\C),$ \emph{i.e.,} the image of a Kleinian group.
\end{Dfn}

\subsection{The KAK-decomposition}

The KAK-decomposition comes from the theory of Lie groups and algebras, it allows us to describe an element of a Lie group as a diagonal matrix up to matrices in a compact subgroup of the Lie group. This decomposition is helpful at the time to describe dynamical aspects of the group and accumulation sets, see \cite{Frances2005}.

Let $G$ be a semi-simple Lie group and $\mathfrak{g}$ its Lie algebra, we will denote by $K$ its maximal compact subgroup and $\mathfrak{k}$ its Lie algebra. It is known that the Killing form induce the following decomposition $\mathfrak{g}=\mathfrak{k}\oplus \mathfrak{k}^\perp.$

Let $\mathfrak{a}$ be the maximal abelian subalgebra on $\mathfrak{k}^\perp,$ this subalgebra induce a decomposition on $\mathfrak{g}$ called the \emph{root decomposition} which is given by 
\[\mathfrak{g}=\bigoplus_{\alpha\in \Sigma\cup\{0\}} \mathfrak{g}_\alpha \qquad  \mathfrak{g}_\alpha =\bigcap_{a\in\mathfrak{a}}\ker\left(ad(a)-\alpha(a)\right),\] where $\Sigma=\left\{\alpha\in \mathfrak{a}^*\setminus\{0\}: \mathfrak{g}_\alpha\neq 0\right\}$ and it is called the restricted root system. Given a total ordering on $\mathfrak{a}^*,$ let say $<,$ we can induce a partition on the root system $\Sigma$ by the sets: $\Sigma^+=\{\alpha\in \Sigma: \alpha>0\}$ and $\Sigma^-=\{\alpha\in \Sigma: \alpha<0\}.$

Using the partition on the root system, we can describe a Weyl chamber on $\mathfrak{a}$ as \[\mathfrak{a}^+=\left\{a\in\alpha:\alpha(a)>0\, \forall \alpha\in\Sigma^+\right\}.\]

\begin{Dfn}
Let $G$ be a Lie group, $K$ its maximal compact subgroup and $\mathfrak{a}^+$ a Weyl chamber (associated to a root decomposition on $\mathfrak{g}$). Any element $g\in G$ can be written as
\begin{equation}\label{Eq:CartanDecomposition}
g=k\exp\left(\mu(g)\right)l,
\end{equation}
where $k,l \in K$ and $\mu:G\to \overline{\mathfrak{a}^+}.$ The map $\mu$ is known as the Cartan projection of $g,$ and by the KAK-decomposition to the product in \eqref{Eq:CartanDecomposition}.
\end{Dfn}

\begin{Rmk}
When $G=\SL(n,\C),$ the KAK-decomposition coincides with the singular value decomposition of the matrix.
\end{Rmk}

The following concepts are related with the Singular value decomposition (KAK-decomposition) in $\SL(n,\C),$ introduced in \cite{Sambarino2019}, that will be helpful to describe the $\lambda-$lemma for the Veronese groups and in future sections a new notion of limit set for Veronese groups. 

\begin{Dfn}
Let $T$ be a linear map between two inner product vector spaces of dimension $n,$ and let \[\sigma_1(T)\geq \cdots \geq\sigma_n(T)\] its \emph{singular values}. If $p\in\{1,\cdots, n-1\}$ and $\sigma_p(T)>\sigma_{p+1}(T),$ then we say that $T$ \emph{has a gap of index p.}
If $T$ has a gap of index $p,$ then we will denote by $U_p(T)$ the $p-$dimensional eigenspace of $\sqrt{T T^*}$ corresponding to the $p$ largest eigenvalues and $S_{n-p}(T)=U_{n-p}(T^{-1}).$
\end{Dfn}

\begin{Rmk}
Notice that the spaces $U_{p}(T)$ and $S_{n-p}(T)$ and the subspaces contained in the flags of Lemma \ref{Prp:Lambdalemma}, are similar. 
\end{Rmk}

\begin{Dfn}
Let $\Gamma$ be a finitely generated group and $\varrho:\Gamma\to \GL(n,\C)$ is \emph{p-dominated} if there exists constants $C,\lambda>0$ such that
\begin{equation}
\frac{\sigma_{p+1}(\varrho(\gamma))}{\sigma_p(\varrho(\gamma))}\leq C e^{\lambda|\gamma| } \qquad \forall \gamma\in \Gamma,
\end{equation}
where $|\cdot|$ denotes the word-length for a given symmetric generating set $S.$
\end{Dfn}

\begin{Rmk}
If a group $\Gamma$ admits a $p-$dominated representation in $\GL(n,\R),$ then $\Gamma$ is word-hyperbolic (see Theorem 3.2 on \cite{Sambarino2019}).
\end{Rmk}

\begin{Dfn}[Lemma 4.7 in \cite{Sambarino2019}]\label{Dfn:LimitMaps}
Let $\varrho:\Gamma\to \GL(n,\C)$ be a $p-$dominated representation. There exists two equivariant continuous map $\xi_p^+:\partial \Gamma\to \Gr(p,d)$ and $\xi_p^-:\partial \Gamma\to \Gr(n-p,n)$ defined by:
\begin{eqnarray}
\xi_p^+(x)&:=&\lim_{m} U_p(\varrho(\gamma_m))\\
\xi_p^-(x)&:=& \lim_{m} S_{n-p}(\varrho(\gamma_m))
\end{eqnarray}
where $(\gamma_m)$ is a quasi-geodesic ray representing $x\in \partial \Gamma.$. 
\end{Dfn}

\begin{Rmk}
In Proposition 4.9 of \cite{Sambarino2019}, the authors prove that if $\varrho:\Gamma\to \GL(n,\C)$ is a $p-$dominated representation, then it satisfies to be a $p-$Anosov representation. This implies that there exists a fiber bundle that splits into sub-bundles that are uniformly contracted by a flow. In the Anosov representation language, the maps $\xi_\pm$ are called the limit maps and satisfies the transversality property
\[\xi_p^+(x)\oplus \xi_p^-(y)=\CP^{n-1}\] where $x,y\in\partial\Gamma$ and $x\neq y.$ 

It is clear from the definition of $\xi_p^\pm$ that their images on $\Gr(p,n)$ and $\Gr(n-p,n)$ are closed $\Gamma-$invariant subsets, \emph{i.e.}, limit set for the action of $\Gamma$ on the corresponding Grassmanian space via the $\varrho-$representation.
\end{Rmk}

The KAK-decomposition on $\SL(n+1,\C)$ let us generalize a dynamical lemma introduced on \cite{Navarrete2006} known as the $\lambda-$lemma. The $\lambda-$lemma describes the accumulation sets on $\CP^2$ for a sequence on a purely loxodromic cyclic subgroup of $\psl(3,\C)$ . This lemma has been generalized for a more general type of (divergent) sequences of $\psl(n+1,\C),$ see Lemma \cite{Cano2017}, where the authors use the KAK-decomposition by bloques. In the particular case of the Veronese groups, we can use the Singular Value Decomposition of divergent sequence in $\psl(2,\C)$ to provide a the KAK decomposition of the image, and describe its accumulation subsets on $\CP^n$ for the action via the irreducible representation.

Before present the $\lambda-$lemma for Veronese subgroups. Notice that if $\gamma\in \psl(2,\C)$ and $D_0=\left(\sigma_1(\gamma),(\sigma_1(\gamma))^{-1}\right)$ is the corresponding diagonal matrix with its singular values, then for $D=\varrho(D_0)$ is the diagonal matrix of singular values of $\varrho(\gamma),$ these singular values satisfy Even more, for any $0<j<n$ we have 
\begin{equation}
\frac{\sigma_{j+1}(\varrho(\gamma))}{\sigma_{j}(\varrho(\gamma))}=\frac{1}{\sigma_1(\gamma)^2}<1
\end{equation}

\begin{Prp}[$\lambda-$Lemma]\label{Prp:Lambdalemma}
Let $(\gamma_m)_{m\in\N}\in\psl(2,\C)$ be a sequence of distinct element such that converges to $\gamma\in \Ps(2,\C)\setminus \psl(2,\C).$ For each $m\in\N,$ let  $\gamma_m=u_ma_mv_m$ be its singular value decomposition. Up to a subsequence, let $u$ and $v$ the limits of $(u_m), (v_m),$ respectively. Then there exists two full flags in $\CP^n,$ let say
 \begin{eqnarray}
  \left(F_1^+,F_{2}^+,\cdots,F_{n}^+\right)&\quad& F_{j}^+=\left[\varrho(u)(\llangle e_1
 ,\cdots,e_j\rrangle)\right]\\
 \left(F_{n+1}^-,F_{n}^-,\cdots, F_2^-\right)&\quad& F_{j}^-=\left[\varrho(v)^{-1}(\llangle e_j,\cdots,e_{n+1}\rrangle)\right]
 \end{eqnarray}
 such that, if $g_m=\varrho(\gamma_m),$  then $g_m\left(F^-_j\setminus F^-_{j+1}\right)$ accumulates on $F^+_{j}$ for each $j=2,\cdots n,$ and $g_m \left(\CP^n\setminus F^-_{2}\right)$ accumulates on $F^+_1.$  Even more, for each $1\leq j\leq n,$ 
 
 \begin{eqnarray*}
 \xi^+_{j}(\gamma)=\lim_{m\to\infty} U_j(g_m)&=&F_{j}^+\\
\xi^-_{j}(\gamma)= \lim_{m\to\infty} S_{n+1-j}(g_m)&=&F_{n+1-j}^-
 \end{eqnarray*}
in the corresponding Grassmanian space and the maps $\xi^\pm_j(\cdot)$ are the ones described in Definition \ref{Dfn:LimitMaps}.
 \label{LambdaLemma}
\end{Prp}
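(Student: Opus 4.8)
The plan is to reduce everything to the scalar singular-value calculation already recorded just before the statement, namely that if $\gamma_m = u_m a_m v_m$ is the singular value decomposition in $\psl(2,\C)$, then $g_m = \varrho(\gamma_m) = \varrho(u_m)\,\varrho(a_m)\,\varrho(v_m)$ is the singular value decomposition of $g_m$ in $\psl(n+1,\C)$, because $\varrho$ sends the maximal compact $\pu(2)$ into $\pu(n+1)$ and sends the diagonal torus to the diagonal torus, and the ratios of consecutive singular values of $\varrho(\gamma_m)$ are all equal to $\sigma_1(\gamma_m)^{-2}$, which tends to $0$ since $\gamma_m \to \gamma \in \Ps(2,\C)\setminus\psl(2,\C)$. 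First I would fix the notation: write $a_m = \mathrm{diag}(t_m, t_m^{-1})$ with $t_m = \sigma_1(\gamma_m) \to \infty$, so $\varrho(a_m) = \mathrm{diag}(t_m^{n}, t_m^{n-2}, \dots, t_m^{-n})$ (up to the projective normalization), and pass to a subsequence so that $u_m \to u$, $v_m \to v$ in $\pu(2)$; continuity of $\varrho$ gives $\varrho(u_m)\to \varrho(u)$ and $\varrho(v_m)\to \varrho(v)$.

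Next I would analyze the ``model'' sequence $D_m := \varrho(a_m)$ acting on $\CP^n$. For a point $[z] = [z_1:\dots:z_{n+1}]$, $D_m[z] = [t_m^{n} z_1 : t_m^{n-2} z_2 : \dots : t_m^{-n} z_{n+1}]$. If $z_j \ne 0$ for some $j \le k$ — equivalently $[z] \notin \Prj(\llangle e_{k+1},\dots,e_{n+1}\rrangle)$ — then dividing by the dominant surviving power shows $D_m[z]$ accumulates inside $\Prj(\llangle e_1,\dots,e_k\rrangle)$; more precisely, if $z_{j_0}\ne 0$ and $z_i = 0$ for $i<j_0$ (so $[z] \in \Prj(\llangle e_{j_0},\dots,e_{n+1}\rrangle)\setminus \Prj(\llangle e_{j_0+1},\dots,e_{n+1}\rrangle)$), then $D_m[z] \to [e_{j_0}]$, a single point in $\Prj(\llangle e_1,\dots,e_{j_0}\rrangle)$. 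Conjugating by the fixed limits: $g_m = \varrho(u_m) D_m \varrho(v_m)$, and $\varrho(v_m)\to\varrho(v)$ uniformly, so for a point in $F_j^- \setminus F_{j+1}^- = [\varrho(v)^{-1}(\llangle e_j,\dots,e_{n+1}\rrangle)] \setminus [\varrho(v)^{-1}(\llangle e_{j+1},\dots,e_{n+1}\rrangle)]$, the image $\varrho(v_m)[z]$ converges into $\Prj(\llangle e_j,\dots,e_{n+1}\rrangle)$ off the next stratum, hence $D_m\varrho(v_m)[z]$ accumulates in $[\llangle e_1,\dots,e_j\rrangle]$, and applying $\varrho(u_m)\to\varrho(u)$ gives accumulation on $F_j^+ = [\varrho(u)(\llangle e_1,\dots,e_j\rrangle)]$. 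The case $[z]\in \CP^n\setminus F_2^-$ is the special case $j=1$, giving accumulation on the point $F_1^+$. Some care is needed to make the ``uniformly on compacta'' statements precise so that the perturbation by $\varrho(v_m)$ versus $\varrho(v)$ does not destroy the stratification — but this is exactly the content of Proposition \ref{Prop:EquicontinuityGeneral}(ii) applied to the sequence $\varrho(v_m)$, whose quasi-projective limit is $\varrho(v)$ (an honest projective map here), so convergence is uniform on all of $\CP^n$.

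Finally, for the limit-map identities: by definition $U_j(g_m)$ is the span of the top $j$ left singular vectors of $g_m$; since $g_m = \varrho(u_m)\varrho(a_m)\varrho(v_m)$ is a genuine SVD of $g_m$ and the singular values of $\varrho(a_m)$ are strictly decreasing (gap of every index $j$, by the displayed ratio $\sigma_1(\gamma_m)^{-2}<1$), $U_j(g_m) = \varrho(u_m)(\llangle e_1,\dots,e_j\rrangle)$ exactly, which converges to $\varrho(u)(\llangle e_1,\dots,e_j\rrangle)$, i.e. to $F_j^+$; likewise $S_{n+1-j}(g_m) = U_{n+1-j}(g_m^{-1}) = \varrho(v_m)^{-1}(\llangle e_{j+1},\dots,e_{n+1}\rrangle)$ wait — indexing: $S_{n+1-j}(g_m)$ is the bottom-$(n+1-j)$ right singular space, spanned by $\varrho(v_m)^{-1}(e_{j+1}),\dots$? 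I would set the indexing straight by direct inspection of $\varrho(a_m)$ and match it to $F_{n+1-j}^- = [\varrho(v)^{-1}(\llangle e_{n+1-j},\dots,e_{n+1}\rrangle)]$, then invoke the definition of $\xi_j^\pm$ from Definition \ref{Dfn:LimitMaps}, noting that a quasi-geodesic ray $(\gamma_m)$ representing $x\in\partial\Gamma$ is in particular a divergent sequence of distinct elements, so the hypotheses apply and the limits exist. The main obstacle I anticipate is purely bookkeeping: getting the flag indices to line up consistently between the $U_p/S_{n-p}$ convention, the decreasing-singular-value convention, and the two reversed flag orderings $(F_1^+,\dots,F_n^+)$ versus $(F_{n+1}^-,\dots,F_2^-)$ in the statement — the dynamics itself is a one-line consequence of the scalar gap estimate.
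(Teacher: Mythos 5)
Your proposal is correct and follows exactly the route the paper sets up: the paper in fact states this proposition \emph{without} proof, but the preceding discussion (that $\varrho$ carries the singular value decomposition $\gamma_m=u_ma_mv_m$ to one of $\varrho(\gamma_m)$, with all consecutive singular-value ratios equal to $\sigma_1(\gamma_m)^{-2}\to 0$) is precisely your reduction to the diagonal model $\varrho(a_m)=\mathrm{diag}(t_m^n,t_m^{n-2},\dots,t_m^{-n})$, and your handling of the perturbation $\varrho(v_m)$ versus $\varrho(v)$ (coordinates beyond the $j$-th are bounded and get multiplied by relatively negligible weights, so all accumulation points land in $[\llangle e_1,\dots,e_j\rrangle]$) is the right way to make the stratified statement precise. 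One remark: the indexing worry you flag at the end is genuine and lies in the statement itself, not in your argument --- since $S_{n+1-j}(g_m)=U_{n+1-j}(g_m^{-1})=\varrho(v_m)^{-1}(\llangle e_{j+1},\dots,e_{n+1}\rrangle)$ has dimension $n+1-j$ while $F^-_{n+1-j}$ has dimension $j+1$, the last displayed identity should read $F^-_{j+1}$ rather than $F^-_{n+1-j}$, and your ``direct inspection'' resolves it the right way.
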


\section{The Notions of Limit set for Veronese Groups}\label{S:Limitsets}

\subsection{The Equicontinuity Region}

As we mention before, the complement of the Equicontinuity region is given by the kernel of quasi-projective limit of sequences. We will use to the KAK-decomposition to describe the kernel and image of the quasi-projective limit for a given sequence, and relate these subsets with the Veronese curve.

Let $\left(\gamma_m\right)_{m\in\N}$ be a sequence of $\psl(2,\C)$ such that $\ds{\lim_{m\to\infty}}\gamma_m=\gamma\in\Ps(2,\C).$ If we take the KAK-decomposition for each element of the sequence \[\gamma_m= u_m a_m v_m, \mbox{ where } \begin{bmatrix} \sigma_1(\gamma_m) & 0 \\ 0 & \sigma_2(\gamma_m)\end{bmatrix}\] 
and $\sigma_1(\gamma_m) > \sigma_2(\gamma_m) $ since we assume that the sequence $\left(\gamma_m\right)_{m\in\N}$ converge to a quasi-projective matrix. Up to a subsequence, we can assume that the both sequence of unitary matrices converge, then the quasi-projective limit of the $(a_m)_{m\in\N}$ is of the form 

\[a=\begin{bmatrix} 1 & 0 \\ 0 & 0\end{bmatrix},\] wich in that case the quasi-projective limit of the $\gamma_m$ sequence is $\gamma=u\cdot a \cdot v.$ Therefore, $\ker(\gamma)=v^{-1}(e_2)$ and $\im(\gamma)=u(e_1).$ Which in certain case both spaces could coincide, for example for the sequence $\gamma_m=g^m$ where $g$ is a parabolic element of $\psl(2,\C).$

\begin{Rmk}\label{Rmk:LimitKernelsImages}
When we pass to $\psl(n+1,\C)$ using the irreducible representation, if $g$ is the quasi-projective limit of the sequence $\varrho(\gamma_m),$ its kernel and image are given by $F^+_1$ and $F^-_{2}$ described in Proposition \ref{Prp:Lambdalemma}. Even more, if we take the sequence $\varrho(\gamma_m^{-1}),$ the quasi-projective limit has by kernel $F^-_{n+1}$ and image $F^+_{n}.$
\end{Rmk}

From the definition of the Veronese curve $\V,$ we have that for the standard basis of $\CP^n$ is valid
\[e_1=\psi\left(\begin{bmatrix}
1 \\ 0 
\end{bmatrix}
\right), \qquad e_{n+1}=\psi\left(\begin{bmatrix}
0   \\ 1
\end{bmatrix}
\right).\] From the equivariant action of $\psl(2,\C)$ on $\V$ we can assure that $F^+_1=\varrho(u)[e_1]\in \V$ and $F^-_{n+1}=\varrho(v^{-1})[e_{n+1}]\in \V,$ even more $F^-_2$ and $F^+_n$ are two hyperplanes that intersects $\V$ only in one point.

\begin{Rmk}
The hyperplanes $F^-_{2}$ and $F^+_{n}$ previously introduced, have a geometric interpretation as \emph{osculating hyperplanes} of $\V$ for a given point. We refer to Chapter 2 of \cite{thesis} for a detailed description. From this interpretation, we can assure that both limit hyperplanes are unique for each quasi-projective limit.
\end{Rmk}


\begin{Thm}
 Let $\Gamma$ a discrete subgroup of $\psl(2,\C)$ and $G=\rho(\Gamma)$ the correspondent Veronese group, then 
 \begin{equation}
  \label{Eqn:EquicontinuityVer}
  \LMyr(G)=\bigcup_{z\in\Lambda(\Gamma)} T_{\psi(z)}\V.
 \end{equation}
where $\Lambda(\Gamma)$ is the limit set of $\Gamma$ for its action on $\CP^1$ and $T_p\V$ is the unique tangent hyperplane to $\V$ in $p.$
\end{Thm}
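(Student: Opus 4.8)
The plan is to use Definition \ref{Dfn:Myrberglimitset}, which describes $\LMyr(G)$ as the union of kernels of all quasi-projective maps that arise as limits of sequences in $G$. So the core task is to identify, for an arbitrary divergent sequence $(g_m)=(\varrho(\gamma_m))$ in $G$ (with $(\gamma_m)$ necessarily divergent in $\psl(2,\C)$ by Proposition \ref{Prop:TypePreserving}), the kernel of its quasi-projective limit. First I would reduce to the case where $(\gamma_m)$ converges to some $\gamma\in\Ps(2,\C)\setminus\psl(2,\C)$, passing to a subsequence via Proposition \ref{Prop:EquicontinuityGeneral}; the kernel of the limit of the full sequence is contained in (indeed equals, up to the subsequence analysis) the union over such subsequential limits. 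Then Remark \ref{Rmk:LimitKernelsImages} identifies the kernel of the quasi-projective limit of $\varrho(\gamma_m)$ with the hyperplane $F_1^+$ of the $\lambda$-lemma (Proposition \ref{Prp:Lambdalemma})—wait, more carefully, $F_1^+$ is a point and the kernel is the hyperplane $F_2^-$; I would use $\varrho(\gamma_m^{-1})$ symmetrically, whose kernel is $F_{n+1}^-$. The discussion just before the theorem statement shows $F_{n+1}^- = \varrho(v^{-1})[e_{n+1}] \in \V$ is a point on the Veronese curve, and $F_n^+$ (the image) is a hyperplane meeting $\V$ in exactly that one point; by the osculating-hyperplane interpretation this hyperplane is exactly the tangent hyperplane $T_p\V$ at $p = F_{n+1}^-$.

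Next I would assemble the two inclusions. For "$\subseteq$": given a convergent subsequence $\gamma_{m_j}\to\gamma$, the Cartan/KAK pieces $u_{m_j}\to u$, $v_{m_j}\to v$ (after a further subsequence), and the relevant kernel is $F_2^- = \varrho(v)^{-1}(\llangle e_2,\dots,e_{n+1}\rrangle)$. I must check that the point of $\V$ cut out by this hyperplane is $\psi(z)$ for some $z\in\Lambda(\Gamma)$: indeed $F_2^-$ meets $\V$ only at the point dual to it, and applying the corresponding statement for $\CP^1$—the sequence $\gamma_{m_j}^{-1}$ converges to a point of $\CP^1$ which is a limit point of $\Gamma$ (this is the classical fact that repelling/attracting data of a divergent sequence lands in $\Lambda(\Gamma)$)—together with $\psl(2,\C)$-equivariance of $\psi$ (the Corollary after Proposition \ref{Prop:VerAutomorphism}), I get that the distinguished point is $\psi(z)$ with $z\in\Lambda(\Gamma)$, so the kernel hyperplane is $T_{\psi(z)}\V$. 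For "$\supseteq$": given $z\in\Lambda(\Gamma)$, choose a sequence $\gamma_m\in\Gamma$ with $\gamma_m^{-1}(w)\to z$ for generic $w$ (standard for Kleinian limit points); then by equivariance and the $\lambda$-lemma the quasi-projective limit of a suitable $\varrho(\gamma_m^{\pm 1})$ has kernel equal to the tangent hyperplane at $\psi(z)$, so $T_{\psi(z)}\V\subseteq\LMyr(G)$. Taking the union over all $z$ gives the reverse inclusion, and since $\LMyr(G)$ is closed and $\Lambda(\Gamma)$ is closed with $\psi$ continuous, no extra closure is needed.

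The main obstacle I anticipate is the bookkeeping in the "$\subseteq$" direction: one must show that \emph{every} subsequential quasi-projective limit of \emph{every} divergent sequence in $G$ has its kernel equal to a tangent hyperplane at a point of $\psi(\Lambda(\Gamma))$, and in particular that the distinguished point of $\V$ extracted from the KAK data of $\varrho(\gamma_m)$ really is the image under $\psi$ of a genuine limit point of $\Gamma$—not merely of some point of $\CP^1$. This requires carefully matching the singular-value (KAK) data of $\gamma_m$ in $\psl(2,\C)$ with the accumulation behaviour of the orbit $\Gamma\cdot w$ in $\CP^1$, i.e. that $v_m^{-1}[e_2]$ (the repelling direction of $\gamma_m$) accumulates only on $\Lambda(\Gamma)$; this is where discreteness of $\Gamma$ and the definition of the limit set enter, and it is the one place where a short genuine argument, rather than a citation, is needed. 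The reverse inclusion and the identification "kernel hyperplane $=$ tangent hyperplane" are then comparatively routine given the osculating-hyperplane remark and the $\lambda$-lemma.
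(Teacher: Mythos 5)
Your proposal is correct and follows essentially the same route as the paper, whose proof is a one-line appeal to exactly the two ingredients you expand on: the classical identification $\Lambda(\Gamma)=\CP^1\setminus\Eq(\Gamma)$ for Kleinian groups, and the $\lambda$-lemma (Proposition \ref{Prp:Lambdalemma}) identifying the kernel of the quasi-projective limit of $\varrho(\gamma_m)$ with the osculating hyperplane at $\psi(z)$, $z=\ker(\gamma)\in\Lambda(\Gamma)$. (Only watch the kernel/image bookkeeping for the inverse sequence: the kernel of the limit of $\varrho(\gamma_m^{-1})$ is the hyperplane $F_n^+$, not the point $F_{n+1}^-$ --- though the paper's own Remark \ref{Rmk:LimitKernelsImages} is equally loose on this.)
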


\begin{proof}
The theorem follows from the fact that $\Lambda(\Gamma)$ and $\CP^1\setminus \Eq(\Gamma)$ coincide for the action of $\Gamma$ in $\CP^1$ and the Proposition \ref{Prp:Lambdalemma}.
\end{proof}

\subsection{The Kulkarni Limit set}

As we mention earlier, there is no general way to compute the Kulkarni limit set of a group of Kleinian group. From the fact that the Kulkarni limit set is a subset of the complement of the Equicontinuity region ( Proposition \ref{Ch0:Prp:KulkarniProperties}), the complement of the Equicontinuity region is a good candidate for the Kulkarni limit set. 

For the action of a Veronese group it is possible to classify the divergent sequences by its limit flags, as we mention in previous paragraphs. This classification generalize the classification of limit points for Kleinian groups into \emph{conical limit points} and \emph{bounded parabolic limit points}.

\begin{Dfn}[\cite{Kapovich2001}]
Let $\Gamma\subset\psl(2,\C)$ be a discrete subgroup and $z\in\Lambda(\Gamma).$ We say that:
\begin{enumerate}
\item $z$ is a \emph{conical point} if there exists a sequence $\{g_j\in\Gamma\}$ such that for every point $x\in\Lambda(\Gamma)\setminus\{z\},$ the sequence $\{(g_j(x),g_j(z))\}$ is relatively compact in $\Lambda(\Gamma)\times \Lambda(\Gamma)\setminus\mbox{Diagonal}.$

\item $z$ is a \emph{bounded parabolic point} if $stab_\Gamma(z)$ is an abelian subgroup containing parabolic elements and such that $\Lambda(\Gamma)/stab_\Gamma(z)$ is compact.
\end{enumerate}
\end{Dfn}

\begin{Dfn}\label{Dfn:DivergentSequenceType}
Let $\left(\gamma_m\right)_{m\in\N}\subset\psl(2,\C)$ be a sequence that converges to a quasi-projective map $\Ps(2,\C)\setminus\psl(2,\C),$  and $g_m=\varrho(\gamma_m).$ For the sequence $\left(g_m\right)_{m\in \N},$ we will say that:
\begin{enumerate}
\item the sequence is of \emph{loxodromic type} if $\im(g)\not\in\ker(g)$ where $g$ is the quasi-projective limit of the sequence $\left(g_m\right)_{m\in \N}.$

\item the sequence is of \emph{parabolic type} if $\im(g)\in\ker(g)$ where $g$ is the quasi-projective limit of the sequence $\left(g_m\right)_{m\in \N}.$
\end{enumerate}
\end{Dfn}

\begin{Rmk}
We have to mention that the divergent sequence of parabolic type are not exclusive of sequences of  parabolic elements  $\psl(n+1,\C).$ For example, we can found this type of sequences using loxodromic elements in $\psl(2,\C).$ Suppose that 

\[\gamma_1=\left[\begin{array}{cc}
           \lambda & 0\\
           0   & \lambda^{-1}
          \end{array}\right] \: \mbox{and}\: \gamma_2=\left[\begin{array}{cc}
							    a & b \\
							    c & d
                                                         \end{array}\right], \]
                                                         
where both maps are loxodromic and  $\mbox{Fix}(\gamma_2)\subset \C P^1\setminus \{[e_1],\,[e_2]\}.$ If we look for the pseudo-projective limit of $(g_m)_{m\in\N}$ given by

\[g_m=
\left[\begin{array}{cc}
  a & b\lambda^{2m}\\
   c\lambda^{-2m} & d
\end{array}\right]=
\left[\begin{array}{cc}
\frac{a}{\lambda^{2m}} & b \\
\frac{c}{\lambda^{4m}} & \frac{d}{\lambda^{2m}}
\end{array}\right]
\]  whose limit has kernel $[e_1]$ and image $[e_1].$
Even more, we can assure that the sequence $(g_m)_{m\in\N}$ converges to $[e_1]$ in compact sets of $\CP^n\setminus\{[e_1]\},$  and $[g_m(e_1)]$ converges to $[e_1].$ Notice that each element $g_m$ is a loxodromic element, but at the limit have the behavior of a parabolic element sequence.
\end{Rmk}

In the particular case of geometrically finite subgroups of $\psl(2,\C)$ the limit points have dichotomy, \emph{i.e.,} if $\Lambda_c(\Gamma)$ denotes the conical limit points of $\Gamma,$ then $\Lambda(\Gamma)\setminus \Lambda_c(\Gamma)$ is made of bounded parabolic limit points. This property can be translate to Veronese groups as follows.

\begin{Lmm}\label{Lmm:GeometrycallyFiniteVeroneseGroups}
Let $\Gamma\subset\psl(2,\C)$ be a geometrically finite Kleinian group and $G=\varrho(\Gamma)$ the corresponding Veronese group on $\psl(n+1,\C).$ Let $\left(g_m\right)_{m\in\N}\subset G$ be a divergent sequence of distinct elements that converges to a quasi-projective map $g,$ then if the sequence is not of loxodromic type then exists it is of parabolic type. Even more, if the sequence $\left(g_m\right)_{m\in\N}$ is of parabolic type, there exists a parabolic type sequence where all elements are parabolic.
\end{Lmm}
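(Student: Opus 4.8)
\emph{Proof proposal.} I would split the argument into the dichotomy and then a reduction of the parabolic-type case to the behaviour at cusps.

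\textbf{Step 1 (the dichotomy).} First I would observe that, $\varrho_n$ being injective on $\psl(2,\C)$, the sequence $(\gamma_m)$ with $g_m=\varrho(\gamma_m)$ consists of distinct elements of $\Gamma$ and must be divergent in $\psl(2,\C)$ — were a subsequence to converge to some $\gamma_\infty\in\psl(2,\C)$ we would get $g_m\to\varrho(\gamma_\infty)\in\psl(n+1,\C)$ along it, against $g$ being a genuine quasi-projective map. By Proposition \ref{Prop:EquicontinuityGeneral} applied in $\CP^1$, after passing to a subsequence $\gamma_m\to\gamma\in\Ps(2,\C)\setminus\psl(2,\C)$; since the matrix entries of $\varrho_n$ are homogeneous polynomials (Lemma \ref{Lmm:IrreducibleRepresentation}), $\varrho_n$ extends continuously to a map $\Ps(2,\C)\to\Ps(n+1,\C)$, so $g=\varrho(\gamma)$. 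Divergence forces $\sigma_1(\gamma_m)\to\infty$ (otherwise the factorisations $\gamma_m=u_m a_m v_m$ all lie in a compact subset of $\psl(2,\C)$), hence $\sigma_1/\sigma_2\to\infty$ and $\gamma=u\,a\,v$ with $a=\mathrm{diag}(1,0)$; consequently $g=\varrho(\gamma)$ has one-dimensional image $\psi(\im\gamma)\in\V$ and kernel the osculating hyperplane of $\V$ at $\psi(\ker\gamma)$ (Proposition \ref{Prp:Lambdalemma}). Now the classification of Definition \ref{Dfn:DivergentSequenceType} genuinely applies to $(g_m)$ and its two clauses are complementary, which is the first assertion; I would also record that, since an osculating hyperplane of $\V$ meets $\V$ only at its point of tangency, $(g_m)$ is of parabolic type exactly when $\im\gamma=\ker\gamma$ in $\CP^1$.

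\textbf{Step 2 (bounded parabolic case).} Assuming $(g_m)$ of parabolic type, put $z:=\im\gamma=\ker\gamma\in\CP^1$. By Proposition \ref{Prop:EquicontinuityGeneral}(ii), $\gamma_m\to z$ uniformly on compact subsets of $\CP^1\setminus\{z\}$, so every orbit $\Gamma x$ with $x\neq z$ accumulates at $z$ and $z\in\Lambda(\Gamma)$. Geometric finiteness then gives the alternative: $z$ is a conical limit point or a bounded parabolic point. If $z$ is bounded parabolic, $\stab_\Gamma(z)$ contains a parabolic $p$, necessarily with $\Fix(p)=\{z\}$; then $h_k:=\varrho(p)^k=\varrho(p^k)$ is parabolic for every $k$ by the type-preserving Proposition \ref{Prop:TypePreserving}, and the iterates $p^k$ converge in $\Ps(2,\C)$ to a rank-one map whose kernel and image are both $z$, so $(h_k)$ is the required parabolic-type sequence all of whose elements are parabolic.

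\textbf{Step 3 (the obstacle: conical case).} The step I expect to be hardest is handling $z\in\Lambda_c(\Gamma)$. Here I would aim to show that a conical $z$ either cannot be the common point of a parabolic-type sequence or, when it is, the situation reduces to Step 2. The subtlety — flagged by the Remark following Definition \ref{Dfn:DivergentSequenceType} — is that parabolic-type sequences are \emph{not} the exclusive province of parabolic elements: conjugating a loxodromic $\ell\in\Gamma$ by its own large powers yields a parabolic-type sequence whose common point is a fixed point of $\ell$, hence conical. So the plan would be to analyse the Cartan projections of the $\gamma_m$ through the thick--thin decomposition of $\Hy^3/\Gamma$ afforded by geometric finiteness, and to argue that once the part of the sequence ``built'' from such conjugations of loxodromics is stripped away, a genuine parabolic fixed point of $\Gamma$ surfaces among the limiting data, returning us to Step 2. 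Making this separation rigorous is the crux of the proof.
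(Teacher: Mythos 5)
Your Steps 1 and 2 are sound and follow essentially the same route as the paper. The paper also reduces everything to the conical/bounded-parabolic dichotomy furnished by geometric finiteness: for a conical point $\xi$ it invokes the relative compactness of the pairs $(\gamma_j(\xi),\gamma_j(\zeta))$ away from the diagonal (Theorem 4.39 of \cite{Kapovich2001}) to rule out $\ker\gamma=\im\gamma$, so the witnessing sequence is of loxodromic type; for a bounded parabolic point it takes a sequence of distinct elements of $\stab_\Gamma(\xi)$, whose quasi-projective limit has $\im=\ker$ — exactly your Step 2. Your Step 1 is in fact more explicit than anything in the paper about why the limit of $(g_m)$ is $\varrho(\gamma)$, why the dichotomy of Definition \ref{Dfn:DivergentSequenceType} is exhaustive for the given sequence, and why parabolic type is equivalent to $\im\gamma=\ker\gamma$ on $\CP^1$.

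The genuine gap is your Step 3, which you leave open — and you should know that the paper's proof does not close it either, because it argues in the opposite direction from the statement. The paper starts from a limit point and \emph{produces} a sequence of the corresponding type (an existence statement), whereas the lemma quantifies over an arbitrary divergent sequence $(g_m)$; it therefore never confronts the case you isolate, namely a parabolic-type sequence whose common point $z=\im\gamma=\ker\gamma$ is conical. That case is unavoidable: the paper's own Remark following Definition \ref{Dfn:DivergentSequenceType} exhibits $\gamma_1^m\gamma_2\gamma_1^{-m}$ as a parabolic-type sequence built entirely from loxodromic elements, and such a configuration lives inside any non-elementary Schottky group, which is geometrically finite (indeed convex co-compact) yet contains no parabolic elements at all. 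Consequently no thick--thin or Cartan-projection analysis can extract ``a parabolic-type sequence all of whose elements are parabolic'' in that situation; the final clause of the lemma is only available when the base point is bounded parabolic, which is all the paper actually proves and all that is used later (the subsequent theorem restricts to convex co-compact groups precisely to sidestep this). In short: your Steps 1--2 reproduce the paper's content, and your Step 3 correctly identifies the crux, but what blocks you is a defect of the statement rather than a missing trick.
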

\begin{proof}
The results follows from the definition of conical and bounded parabolic limit points for $\Gamma$ and the fact that a limit point of $\Gamma$ correspond to a divergent sequence of $\Gamma$ that converges to a quasi-projective map $\Ps(2,\C).$

Let $\xi\in\Lambda(\Gamma)$ be a conical limit point. From Theorem 4.39 \cite{Kapovich2001}, for each $\zeta\in\CP^1\setminus\{\xi\},$ there exists a sequence $\left(\gamma_m\right)_{m\in\N}$ of distinct elements such that for $\xi\in \CP^1\setminus \{\zeta\},$ then \[\{\gamma_j(\xi),\gamma_j(\zeta)\}\subset \CP^1\times \CP^1\setminus \mbox{Diagonal}\] is relatively compact subset. 

Le $\gamma=\ds{\lim_{m\to\infty}}\gamma_m$ be the quasi-projective limit set of the previous sequence. If $\ker(\gamma)=\im(\gamma),$ then the set $ \{\gamma_j(\xi),\gamma_j(\zeta)\}$ is no longer relatively compact subset of $\CP^1\times \CP^1\setminus \mbox{Diagonal}$ because $\{\gamma_j(\xi),\gamma_j(\zeta)\}$ accumulates to $(\im(\gamma), \ker(\gamma))\in\mbox{Diagonal}.$ Therefore $\ker(\gamma)\neq\im(\gamma),$ and the sequence $\left(\varrho(\gamma_m)\right) _{m\in\N}$ is a loxodromic type divergent sequence.

If $\xi$ is a bounded parabolic limit point, in particular $stab_\Gamma(\xi)$ is a parabolic subgroup of $\Gamma$ and therefore we can construct a sequence $\{\gamma_m\}_{m\in\N}\subset stab_\Gamma(\xi)$ of distinct elements. For such sequence the quasi-projective limit $\gamma=\ds{\lim_{m\to \infty}\gamma_m}$ satisfy $\im(\gamma)=\ker(\gamma),$ and the corresponding sequence $\left(\varrho(\gamma_m)\right)$ is a parabolic type sequence.
\end{proof}

Note that if $p$ is a bounded parabolic limit point of a Kleinian group $\Gamma,$ by the previous Lemma, there exists a sequence of distinct  on $\varrho(Stab_\Gamma(p))$ that converge to $\psi(p).$ If we use the Proposition \ref{Prp:Lambdalemma} with this sequence we obtain that $F_j^+=F_{n+2-j}^-,$ which means that both flags coincide. But the dynamics of the group $\varrho(Stab_\Gamma(p))$ on $\CP^n$ is like the dynamics of the upper triangular matrices, where the only attractive point is the global fixed point $F_1^+.$ Therefore is not clear that the dynamical behavior of Proposition \ref{Prp:Lambdalemma} is still valid. For this reason, we will work only with convex co-compact subgroups of $\psl(2,\C),$ because their limit points are conical, which implies that for the corresponding Veronese group there are only loxodromic type sequences.

\begin{Thm}
Let $\Gamma\subset \psl(2,\C)$ be a convex co-compact Kleinian group, and $\varrho_n(\Gamma)\subset \psl(n+1,\C)$ be its associated Veronese group. For the action of $\varrho_n(\Gamma)$ in $\CP^n,$ it is satisfied
\begin{equation}
\LKul(\varrho(\Gamma))=\CP^n\setminus\Eq(\varrho(\Gamma))=\bigcup_{z\in\Lambda(\Gamma)}T_{\psi(z)}\V.
\end{equation}
where $T_{\psi(z)}\V$ is the osculating hyperplane to $\V$ at $\psi(z).$
\end{Thm}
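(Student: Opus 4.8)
The plan is to establish the chain of equalities by proving two inclusions, using the Kulkarni properties collected in Proposition~\ref{Ch0:Prp:KulkarniProperties} together with the $\lambda$-Lemma (Proposition~\ref{Prp:Lambdalemma}) specialized to convex co-compact groups, where by Lemma~\ref{Lmm:GeometrycallyFiniteVeroneseGroups} every divergent sequence is of loxodromic type. First I would recall that the rightmost set $\bigcup_{z\in\Lambda(\Gamma)}T_{\psi(z)}\V$ is exactly $\CP^n\setminus\Eq(\varrho(\Gamma))=\LMyr(\varrho(\Gamma))$ by the previous theorem, so it suffices to prove $\LKul(\varrho(\Gamma))=\CP^n\setminus\Eq(\varrho(\Gamma))$. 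The inclusion $\CP^n\setminus\Eq(\varrho(\Gamma))\subseteq\LKul(\varrho(\Gamma))$ follows from Proposition~\ref{Ch0:Prp:KulkarniProperties}(iv): the equicontinuity set is contained in $\OKul$, so its complement contains $\LKul$... wait, that is the wrong direction — I would instead argue directly that each osculating hyperplane $T_{\psi(z)}\V$ with $z\in\Lambda(\Gamma)$ lies in $L_0\cup L_1$. Indeed, for $z$ conical there is a sequence $g_m=\varrho(\gamma_m)$ of loxodromic type with limit flags $F^\pm_j$; by the $\lambda$-Lemma the orbit of any point off the hyperplane $F^-_2$ accumulates on the point $F^+_1=\psi(z')$, and running the argument over all of $\Gamma$ shows every point of every such hyperplane is a cluster point of an orbit of a point or of an isotropy sequence, hence lies in $L_0(\varrho(\Gamma))\cup L_1(\varrho(\Gamma))\subseteq\LKul(\varrho(\Gamma))$. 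Taking closures and using $\Gamma$-invariance of $\Lambda(\Gamma)$ gives the full union inside $\LKul$.

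For the reverse inclusion $\LKul(\varrho(\Gamma))\subseteq\CP^n\setminus\Eq(\varrho(\Gamma))=\bigcup_{z\in\Lambda(\Gamma)}T_{\psi(z)}\V$, I would invoke Proposition~\ref{Ch0:Prp:KulkarniProperties}(iii) with the candidate closed set $\mathcal{C}=\bigcup_{z\in\Lambda(\Gamma)}T_{\psi(z)}\V$. One must check: (a) $\mathcal{C}$ is closed — this uses that $\Lambda(\Gamma)$ is closed in $\CP^1$, that $\psi$ is a homeomorphism onto $\V$, and that the map $p\mapsto T_p\V$ is continuous (the osculating-hyperplane map of the rational normal curve is a genuine morphism to $(\CP^n)^*$); (b) $\mathcal{C}$ is $\varrho(\Gamma)$-invariant — immediate from $\Gamma$-equivariance of $\psi$ (the Corollary after Proposition~\ref{Prop:VerAutomorphism}) and the fact that projective transformations preserving $\V$ carry osculating hyperplanes to osculating hyperplanes; and (c) for every compact $K\subset\CP^n\setminus\mathcal{C}$, the cluster set of $\varrho(\Gamma)K$ lies in $(L_0\cup L_1)\cap\mathcal{C}$. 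For (c) I would take any divergent sequence $g_m=\varrho(\gamma_m)$; by convex co-compactness and Lemma~\ref{Lmm:GeometrycallyFiniteVeroneseGroups} it is of loxodromic type, so by the $\lambda$-Lemma its quasi-projective limit $g$ has $\ker g=F^-_2=T_{\psi(z)}\V$ for some $z\in\Lambda(\Gamma)$ and image $F^+_1=\psi(z')\in\V\subset\mathcal{C}$; since $K$ avoids $\mathcal C$ it avoids $\ker g$, so $g_m|_K\to g|_K$ uniformly (Proposition~\ref{Prop:EquicontinuityGeneral}) and every cluster point of $\varrho(\Gamma)K$ is one of these image points, which indeed lies in $\mathcal{C}$ and, being a cluster point of an orbit, in $L_1$ (or $L_0$). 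Hence Proposition~\ref{Ch0:Prp:KulkarniProperties}(iii) yields $\LKul(\varrho(\Gamma))\subseteq\mathcal{C}$.

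Combining the two inclusions with the identification $\mathcal C=\CP^n\setminus\Eq(\varrho(\Gamma))$ from the preceding theorem gives the desired triple equality.

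I expect the main obstacle to be step (c), specifically the bookkeeping needed to be sure that \emph{every} cluster point of $\varrho(\Gamma)K$ — not just those coming from a fixed sequence — lands in $L_0\cup L_1$ rather than requiring the full three-step Kulkarni construction; this is where convex co-compactness is essential, since the bounded-parabolic case (excluded here) is precisely where the limit flags degenerate ($F^+_j=F^-_{n+2-j}$) and the clean $\lambda$-Lemma dynamics fail, as noted in the paragraph before the theorem. A secondary technical point is proving continuity and closedness of the osculating-hyperplane map in (a), which is routine for the rational normal curve but should be stated carefully, perhaps by exhibiting $T_{\psi([x:y])}\V$ explicitly via the Wronskian/derivative of the Veronese parametrization and noting its coefficients are polynomial in $[x:y]$.
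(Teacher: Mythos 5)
Your skeleton matches the paper's: reduce everything to $\LKul(\varrho(\Gamma))=\CP^n\setminus\Eq(\varrho(\Gamma))$, use convex co-compactness (via the conical/bounded-parabolic dichotomy) to force every divergent sequence to be of loxodromic type, and feed this into the $\lambda$-Lemma. But there is a genuine gap in your argument for $\CP^n\setminus\Eq(\varrho(\Gamma))\subseteq\LKul(\varrho(\Gamma))$, which is the hard inclusion. You assert that every point of every osculating hyperplane $T_{\psi(z)}\V$ is a cluster point of the orbit of a single point (or of an isotropy sequence) and hence lies in $L_0(\varrho(\Gamma))\cup L_1(\varrho(\Gamma))$. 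The $\lambda$-Lemma does not give this: for a loxodromic-type sequence $g_m$, the orbit of a point off the kernel hyperplane $F^-_2$ accumulates only on the single attracting point $F^+_1\in\V$, and the statement that $g_m\bigl(F^-_j\setminus F^-_{j+1}\bigr)$ accumulates on $F^+_j$ is a statement about images of \emph{sets}. To reach a prescribed $z\in F^+_j$ one must produce a sequence $x_m\to x\in F^-_j\setminus F^-_{j+1}$ with $g_m(x_m)$ accumulating at $z$, and a priori this only exhibits $z$ as a cluster point of the orbit of a compact set, i.e., as a point of $L_2$. This is precisely why the paper's proof ends with a case analysis: if infinitely many $x_m$ lie in $L_0\cup L_1$, then $z\in L_0\cup L_1$; otherwise a tail of $(x_m)$ lies in the complement and $z\in L_2(\varrho(\Gamma))$. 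Either way $z\in\LKul(\varrho(\Gamma))$, but your stronger claim that $L_0\cup L_1$ already suffices is unjustified --- the whole reason $L_2$ appears in Kulkarni's definition is that cluster points of compact sets can strictly exceed cluster points of single orbits.

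For the reverse inclusion you take an unnecessary detour. Proposition \ref{Ch0:Prp:KulkarniProperties}(iv), namely $\Eq(\varrho(\Gamma))\subset\OKul(\varrho(\Gamma))$, immediately yields $\LKul(\varrho(\Gamma))\subset\CP^n\setminus\Eq(\varrho(\Gamma))$, and this is all the paper uses; you even noted this and then set it aside. Your alternative route through part (iii) with $\mathcal{C}=\bigcup_{z\in\Lambda(\Gamma)}T_{\psi(z)}\V$ can be made to work --- your verification of condition (c) is essentially sound, since a compact set avoiding all the kernel hyperplanes has its cluster set contained in the attracting points $\psi(z')\in\psi(\Lambda(\Gamma))\subset L_1$ --- but it costs the extra checks (a)--(c) for no gain. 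The fix you actually need is in the first inclusion: replace the claim of membership in $L_0\cup L_1$ by the dichotomy landing in $(L_0\cup L_1)$ or $L_2$, as in the paper.
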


\begin{proof}
Recall that from properties of the Kulkarni limit set, we have that $\LKul(\varrho(\Gamma))\subset \CP^n\setminus\Eq(\varrho(\Gamma)).$ We will prove that other contention.

Let $z\in \CP^n\setminus\Eq(\varrho(\Gamma)).$ From the fact that $\LKul(\varrho(\Gamma))\cap\V=\Lambda(\Gamma),$  we can assume that $z\not\in \V.$

From the definition of $\CP^n\setminus\Eq\left(\varrho_n(\Gamma)\right),$ we assure that there exists a limit point $p\in\Lambda(\Gamma)$ such that $z\in T_{\psi(p)}\V.$ If $p$ is a conical limit point, and from the Lemma \ref{Lmm:GeometricallyFiniteVeroneseGroups}, there exists a sequence $(\gamma_j )_{j\in\N} \subset \varrho(\Gamma)$ of distinct elements such that if $g=\ds{\lim_{n\to \infty}}\gamma_j$ then $\im(g)\not\in\ker(g).$ From the $\lambda-$lemma (Proposition \ref{Prp:Lambdalemma}) applied to the sequence $(\gamma_j),$ there exists a pair of full flags 
\begin{eqnarray*}
\left(F^+_1,F^+_{2},\cdots,F^+_{n}\right)&\quad& F_1^+=\im(g)\\
 \left(F^-_{n+1},F^-_{n},\cdots,F^-_{2}\right)&\quad& F^-_2=\ker(g).
\end{eqnarray*}


There exists  $1\leq j\leq n$ such that $z\in F^+_{j},$ from Proposition \ref{Prp:Lambdalemma} we know that there exists a sequence  $(x_m)_{m\in\N}\subset \CP^n$ such that $x_m\to x\in F^-_j\setminus F^-_{j+1}$ and $\gamma_m(x_m)$ accumulates to $z.$

If the sequence $(x_m)_{m\in\N}$ contains infinitely many elements of points of $L_0(\varrho(\Gamma))\cup L_1(\varrho(\Gamma)),$ the limit point $x$ belong to $L_0(\varrho(\Gamma))\cup L_1(\varrho(\Gamma))$ and therefore $z$ too. If the sequence $(x_m)_{m\in\N}$ contains finitely many elements of $L_0(\varrho(\Gamma))\cup L_1(\varrho(\Gamma)),$, after a subsequence, we can assure that $(x_{m_j})_{j\in\N} \subset \CP^n\setminus L_0(\varrho(\Gamma))\cup L_1(\varrho(\Gamma))$ and this implies that $z$ belong to $L_2(\varrho(\Gamma)).$ In any case, $z\in \LKul(\varrho(\Gamma)).$

\end{proof}

\section{The extended Conze-Guivarc'h limit set}\label{S:CGlimitset}

For subgroups of $\psl(2,\C)$ a proximal element correspond to a loxodromic map and one of its fixed points coincide with the dominant vector of the proximal map. Even more, the loxodromic fixed points are always conical limit points in a Kleinian group, therefore if $\Gamma\subset \psl(2,\C)$ satisfy the (CG)-property, then all its limit points are conical points which implies that $\Gamma$ is \emph{convex-cocompact.}

In sight of the proximal property and the Proposition \ref{Prp:Lambdalemma} we can generalize the notion of proximality for Veronese groups in loxodromic type divergent sequences. Recall that for an element of the $\varrho(\psl(2,\C))$ the diagonal matrix in its KAK-decomposition has $\lfloor \frac{n+2}{2}\rfloor$ ``maximal'' eigenvalues . Therefore, up to unitary matrices, we can assume that all elements are proximal.

\begin{Dfn}
Let $\left(g_m\right)_{m\in\N}$ be sequence of distinct elements in $\varrho(\psl(2,\C)),$ such that converge to some element $g\in \Ps(n+1,\C).$ If $\left(g_m\right)_{m\in\N}$ is a loxodromic type divergent sequence, we define the  of $\left(g_m\right)_{m\in\N}$ to be the projective subspace $F^+_{\lfloor \frac{n+2}{2}\rfloor}=\xi^+_{\lfloor \frac{n+2}{2}\rfloor},$ where $\xi^+_{j}(g)$ is described in Proposition \ref{Prp:Lambdalemma}. 
 \end{Dfn}

\begin{Dfn}
Let $\Gamma$ be a discrete subgroup of $\psl(2,\C)$ and $G=\varrho(\Gamma)$ its correspondent Veronese group. We define the \emph{extended Conze-Guivarc'h limit set} of $G,$ denoted by $L^{Ext}_{CG}(G)$ as
 \begin{equation}
  L^{Ext}_{CG}(\varrho(\Gamma))=\overline{\bigcup_{p\in\Lambda(\Gamma)}\xi^+_{\lfloor \frac{n+2}{2}\rfloor}(\psi(p))}
 \end{equation}
 made of the extended dominated subspaces associated to a divergent sequence that has $\psi(p)$ as the image of its quasi-projective limit.
\end{Dfn}

Clearly the previous set is a closed set whose is invariant under the $\varrho(\Gamma),$ and by its definition it is contained in $\CP^n\setminus\Eq(\varrho(\Gamma)).$

\begin{Prp}
 Let $\Gamma\in\psl(2,\C)$ a discrete group which satisfy the (CG)-property. If $G$ denotes the associated Veronese group, then $\varrho(\Gamma)$ acts properly discontinuous on $\CP^n\setminus L^{Ext}_{CG}(G).$
\end{Prp}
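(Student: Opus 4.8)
The plan is to reduce proper discontinuity on $\CP^n\setminus L^{Ext}_{CG}(G)$ to the Kulkarni machinery already developed in Section \ref{S:Limitsets}, exploiting that the (CG)-property forces $\Gamma$ to be convex co-compact. First I would observe that since $\Gamma$ satisfies (CG), every limit point of $\Gamma$ is conical (as noted in the text, loxodromic fixed points are conical and proximality gives enough loxodromic elements), so $\Gamma$ is convex co-compact and Lemma \ref{Lmm:GeometrycallyFiniteVeroneseGroups} applies: every divergent sequence in $G$ is of loxodromic type. This is the structural input that makes the $\lambda$-lemma (Proposition \ref{Prp:Lambdalemma}) usable without the degeneracies caused by bounded parabolic points.

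Next I would set up the comparison between $L^{Ext}_{CG}(G)$ and $\CP^n\setminus\Eq(G)$. By the Main Theorem \ref{MThm:Kulkarni} (valid here since $\Gamma$ is convex co-compact), $\CP^n\setminus\Eq(G)=\LKul(G)=\bigcup_{z\in\Lambda(\Gamma)}T_{\psi(z)}\V$, and $G$ acts properly discontinuously on $\OKul(G)=\Eq(G)$. Since $L^{Ext}_{CG}(G)=\overline{\bigcup_{p\in\Lambda(\Gamma)}\xi^+_{\lfloor(n+2)/2\rfloor}(\psi(p))}\subset\CP^n\setminus\Eq(G)$, we have $\Eq(G)\subset\CP^n\setminus L^{Ext}_{CG}(G)$, and on $\Eq(G)$ the action is already properly discontinuous. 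So the work is confined to the \emph{difference} $D:=\big(\CP^n\setminus L^{Ext}_{CG}(G)\big)\setminus\Eq(G)$, i.e. points lying on some osculating hyperplane $T_{\psi(z)}\V$ but not on any dominant subspace $\xi^+_{\lfloor(n+2)/2\rfloor}$.

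To handle $D$, I would argue directly with sequences. Let $K\subset\CP^n\setminus L^{Ext}_{CG}(G)$ be compact and suppose $g_m\in G$ are distinct with $g_m K\cap K\neq\emptyset$; pass to a subsequence so $(g_m)$ converges to a quasi-projective map $g$, which by Lemma \ref{Lmm:GeometrycallyFiniteVeroneseGroups} is of loxodromic type, with image $F^+_1=\xi^+_1(g)$ and kernel $F^-_2$ as in Proposition \ref{Prp:Lambdalemma}. The key point is that $\xi^+_1(g)\subset\xi^+_{\lfloor(n+2)/2\rfloor}(g)\subset L^{Ext}_{CG}(G)$, and more generally the full flag $F^+_1\subset\cdots\subset F^+_{\lfloor(n+2)/2\rfloor}$ lies in $L^{Ext}_{CG}(G)$. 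By the accumulation statement in Proposition \ref{Prp:Lambdalemma}, $g_m(x_m)\to z$ with $x_m\to x\in F^-_j\setminus F^-_{j+1}$ forces $z\in F^+_j$; since $K$ avoids $L^{Ext}_{CG}(G)$, the limit points of $g_mK$ that could land in the low-index flag pieces $F^+_j$ with $j\le\lfloor(n+2)/2\rfloor$ are excluded, so one must have $K$ eventually disjoint from a neighborhood of the relevant $F^-_j$, forcing $g_mK$ to escape to the complementary flag pieces — and here one uses that $K$ compact plus the contraction rates of the singular values (all gaps of index $\le n$ are strict, as recorded before Proposition \ref{Prp:Lambdalemma}) gives a uniform estimate. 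Assembling these, $g_mK\cap K\neq\emptyset$ for infinitely many $m$ yields a limit point of $K$ inside $L^{Ext}_{CG}(G)$, a contradiction.

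The main obstacle I anticipate is the last step: controlling a \emph{compact} $K$ rather than a point, i.e. promoting the pointwise accumulation of Proposition \ref{Prp:Lambdalemma} to uniform convergence on $K$. The clean way is to invoke Proposition \ref{Prop:EquicontinuityGeneral}(ii): $g_m\to g$ uniformly on compact subsets of $\CP^n\setminus[\ker g]=\CP^n\setminus F^-_2$, so if $K$ is disjoint from $F^-_2$ then $g_mK$ converges uniformly to $g(K)\subset F^+_1\subset L^{Ext}_{CG}(G)$, and $g_mK\cap K\ne\emptyset$ infinitely often gives a point of $K$ in $L^{Ext}_{CG}(G)$. If $K$ meets $F^-_2$ one stratifies: $K$ still avoids the top of the $F^+$-flag, so after further subsequencing the relevant portion of $K$ sits in some $F^-_j\setminus F^-_{j+1}$ with $j>\lfloor(n+2)/2\rfloor$, where the same argument applied to the restricted sequence — or to $\varrho(\gamma_m^{-1})$, using the dual flag description in Remark \ref{Rmk:LimitKernelsImages} — forces the images into the corresponding $F^+_j$. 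Since every such $F^+_j$ with $j\ge$ the complementary index is contained in $L^{Ext}_{CG}(G)$ (this containment of the whole relevant sub-flag in the extended limit set is exactly where the definition with $\lfloor(n+2)/2\rfloor$ is used, together with $G$-invariance and closedness of $L^{Ext}_{CG}(G)$), we again reach a contradiction with $K\cap L^{Ext}_{CG}(G)=\emptyset$. This completes the verification that the action on $\CP^n\setminus L^{Ext}_{CG}(G)$ is properly discontinuous.
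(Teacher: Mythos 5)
Your overall strategy --- a direct compact-set argument via divergent sequences, their quasi-projective limits, and the flags of Proposition \ref{Prp:Lambdalemma} applied both to $(g_m)$ and to $(g_m^{-1})$ --- is genuinely different from, and considerably more complete than, the paper's own proof. The paper only records that each limit point $p\in\Lambda(\Gamma)$ is the dominant vector of a proximal (hence loxodromic) $\gamma\in\Gamma$ and that the cyclic group generated by $\varrho(\gamma)$ acts properly discontinuously on $\CP^n\setminus \xi^+_{\lfloor \frac{n+2}{2}\rfloor}(p)$; the assembly of these cyclic statements into proper discontinuity of all of $G$ on all of $\CP^n\setminus L^{Ext}_{CG}(G)$ is left implicit. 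Your reduction to a sequence $(g_m)$ of loxodromic type with $g_mK\cap K\neq\emptyset$, together with uniform convergence off the kernel from Proposition \ref{Prop:EquicontinuityGeneral}, is the right skeleton for that assembly (the detour through Theorem \ref{MThm:Kulkarni} is harmless but not needed).

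However, the decisive combinatorial step in your last paragraph is stated backwards and, as written, fails. You claim that the part of $K$ meeting $F^-_2$ sits in strata $F^-_j\setminus F^-_{j+1}$ with $j>\lfloor \frac{n+2}{2}\rfloor$ and that the corresponding $F^+_j$ lie in $L^{Ext}_{CG}(G)$. Neither holds: the spaces $F^+_j$ with $j>\lfloor \frac{n+2}{2}\rfloor$ are exactly the ones \emph{not} contained in $L^{Ext}_{CG}(G)$, while the deep strata of the $F^-$-flag are exactly the ones $K$ \emph{cannot} meet. Indeed $(g_m^{-1})$ is also a divergent sequence in $G$ whose quasi-projective limit has image on $\V_n$ over a point of $\Lambda(\Gamma)$, and comparing linear dimensions ($F^-_k$ has dimension $n+2-k$, $\xi^+_p$ has dimension $p$) gives $F^-_{\lceil \frac{n+2}{2}\rceil}(g)=\xi^+_{\lfloor \frac{n+2}{2}\rfloor}(g^{-1})\subset L^{Ext}_{CG}(G)$, hence $K$ avoids $F^-_j$ for every $j\geq \lceil \frac{n+2}{2}\rceil$. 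Therefore every point of $K$ lies either in $\CP^n\setminus F^-_2$ or in some $F^-_j\setminus F^-_{j+1}$ with $j\leq \lceil \frac{n+2}{2}\rceil-1\leq\lfloor \frac{n+2}{2}\rfloor$, so by the $\lambda$-lemma $g_mK$ accumulates only on $F^+_j\subset F^+_{\lfloor \frac{n+2}{2}\rfloor}\subset L^{Ext}_{CG}(G)$, which $K$ avoids: that is the contradiction you want. With the inequalities corrected in this way your argument closes; as written, the assertion that ``every such $F^+_j$ \dots is contained in $L^{Ext}_{CG}(G)$'' is false and the proof does not go through.
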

\begin{proof}
From Proposition 2.4 of \cite{Conze2000}, we know that if $\Gamma\subset \psl(2,\C)$ satisfy the (CG)-property the Conze-Guivarc'h limit set of $\Gamma$ is minimal and therefore it coincides with the usual limit set $\Lambda(\Gamma).$ We can assume that for every limit point $p\in\Lambda(\Gamma)$ there exists a proximal map $\gamma$ such that $p$ corresponds to its dominant vector. Recall that in $\psl(2,\C)$ a proximal element correspond to loxodromic element , therefore the sequence $\left(\varrho(\gamma^m)\right)_{m\in\N}$ is a loxodromic type sequence and by Proposition \ref{Prp:Lambdalemma} we have that $\left(\gamma^m\right)_{m\in\N}$ acts properly discontinuous on $\CP^n\setminus \xi^+_{\lfloor \frac{n+2}{2}\rfloor}(p).$
\end{proof}

\begin{Thm}
 Let $\Gamma\in\psl(2,\C)$ a discrete group that satisfy the (CG)-property, let $G$ denotes its associated Veronese group. Then $L^{Ext}_{CG}(G)$ is a minimal $G-$invariant closed set where the action is properly discontinuous.
\end{Thm}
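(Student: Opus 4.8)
The plan is to prove two things: first, that $G$ acts properly discontinuously on $\CP^n\setminus L^{Ext}_{CG}(G)$ (which is already the content of the preceding Proposition, so it may simply be cited), and second, the new content, namely \emph{minimality} of $L^{Ext}_{CG}(G)$ among closed $G$-invariant sets on whose complement the action is properly discontinuous. So the real work is: given any closed $G$-invariant set $\mathcal{C}$ such that $G$ acts properly discontinuously on $\CP^n\setminus\mathcal{C}$, show $L^{Ext}_{CG}(G)\subseteq\mathcal{C}$.

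First I would reduce to showing that each "extended dominant subspace" $\xi^+_{\lfloor (n+2)/2\rfloor}(\psi(p))$, for $p\in\Lambda(\Gamma)$, is contained in $\mathcal{C}$; since $L^{Ext}_{CG}(G)$ is the closure of the union of these and $\mathcal{C}$ is closed, this suffices. Fix such a $p$. By Proposition 2.4 of \cite{Conze2000} together with the (CG)-property, $\Lambda(\Gamma)$ is the closure of the dominant vectors of proximal (hence loxodromic) elements, so after passing to a limit it is enough to treat the case where $p$ is the attracting fixed point of a loxodromic $\gamma\in\Gamma$ and then use closedness of $\mathcal{C}$ again to pass to general $p$. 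For such a $\gamma$, consider the sequence $g_m=\varrho(\gamma^m)$; this is a loxodromic type divergent sequence, and by the $\lambda$-lemma (Proposition \ref{Prp:Lambdalemma}) it has limit flags $F^+_\bullet$ and $F^-_\bullet$ with $g_m(F^-_j\setminus F^-_{j+1})$ accumulating on $F^+_j$. The key point: take any point $x$ in the complement of $F^-_{\,\lceil (n+2)/2\rceil+?}$... more precisely, for each index $j$ with $2\le j\le n$ pick $x\in F^-_j\setminus F^-_{j+1}$ outside $\mathcal{C}$ (possible since $\mathcal{C}$ has empty interior when the action on its complement is properly discontinuous and $\mathcal{C}$ is a proper closed invariant set — or, failing that, argue directly that $\mathcal{C}$ cannot contain an entire nonempty open subset of these strata). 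Then $\{g_m(x)\}\subset\mathcal{C}$-complement has accumulation points filling out a dense subset of $F^+_j$; since $\mathcal{C}$ is closed and the orbit of such $x$ must avoid $\mathcal{C}$... wait — this shows the accumulation set is \emph{not} forced into $\mathcal{C}$. I must instead run the contrapositive: if $F^+_{\lfloor(n+2)/2\rfloor}\not\subseteq\mathcal{C}$, pick $z\in F^+_{\lfloor(n+2)/2\rfloor}\setminus\mathcal{C}$ and a point $x$ in an appropriate stratum $F^-_j\setminus F^-_{j+1}$ lying in $\CP^n\setminus\mathcal{C}$ whose $g_m$-orbit accumulates at $z$; compactness of a neighborhood of $x\cup\{z\}$ inside $\CP^n\setminus\mathcal{C}$ together with the divergence of $(g_m)$ contradicts proper discontinuity. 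The one subtlety is ensuring such an $x\notin\mathcal{C}$ exists: this is where I use that $\mathcal{C}$ is $G$-invariant and closed with nonempty complement, so its complement is a nonempty open $G$-invariant set; and that the strata $F^-_j\setminus F^-_{j+1}$ are not all contained in $\mathcal{C}$ — if $F^-_2\subseteq\mathcal{C}$ then by $G$-invariance and the density of loxodromic fixed hyperplanes one gets all of $\CP^n\setminus\Eq$ inside $\mathcal{C}$, and in particular $L^{Ext}_{CG}(G)\subseteq\mathcal{C}$ trivially; otherwise some stratum meets the complement.

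For the properly-discontinuous half of the statement, I would simply invoke the preceding Proposition (which already states $G$ acts properly discontinuously on $\CP^n\setminus L^{Ext}_{CG}(G)$ under the (CG)-property), noting that $L^{Ext}_{CG}(G)$ is closed and $G$-invariant by construction as remarked just before it.

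\medskip

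The step I expect to be the main obstacle is the dichotomy argument guaranteeing a point $x\notin\mathcal{C}$ in the right stratum $F^-_j\setminus F^-_{j+1}$: one needs to know that a closed $G$-invariant $\mathcal{C}$ on whose complement $G$ acts properly discontinuously cannot swallow an entire negative-flag stratum unless it already contains the whole extended Conze--Guivarc'h set, and making that rigorous requires either a density argument on loxodromic data (using the (CG)/strong irreducibility hypothesis to spread fixed flags around) or a careful bookkeeping of which strata the $g_m$-orbit of a generic point passes through. The transversality $\xi^+_j(x)\oplus\xi^-_j(y)=\CP^n$ for $x\ne y$ from the Anosov picture should give enough genericity to always find the required $x$.
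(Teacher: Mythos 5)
Your strategy --- fix a closed $G$-invariant set $\mathcal{C}$ on whose complement the action is properly discontinuous and derive a contradiction from a point $z\in\xi^+_{\lfloor (n+2)/2\rfloor}(\psi(p))\setminus\mathcal{C}$ --- aims at the containment $L^{Ext}_{CG}(G)\subseteq\mathcal{C}$ that the word ``minimal'' actually requires, but it is not the route the paper takes. The paper's proof runs in the opposite direction: it takes $\zeta\in W$, asserts (without justification) that $\zeta\in\CP^n\setminus\Eq(G)$, places $\zeta$ in $\xi^+_n(g)$ for a loxodromic-type sequence $g_m\to g$, and, when $\zeta\notin\xi^+_{\lfloor (n+2)/2\rfloor}(g)$, claims $\zeta\in\xi^+_{\lfloor (n+2)/2\rfloor}(f)$ with $f=\lim g_m^{-1}$; that is, it proves the reverse inclusion $W\subseteq L^{Ext}_{CG}(G)$ and never invokes proper discontinuity of the action on $\CP^n\setminus W$. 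So the two arguments establish different statements, and you should not expect the paper to supply the step you are missing.

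That missing step is a genuine gap, and it sits exactly where you flagged it. Two separate problems. First, the $\lambda$-lemma gives accumulation of $g_m\left(F^-_j\setminus F^-_{j+1}\right)$ on $F^+_j$ only through sequences $x_m\to x\in F^-_j\setminus F^-_{j+1}$ with $g_m(x_m)\to z$; for a \emph{fixed} $x$ the orbit $(g_m(x))_m$ need not accumulate at your chosen $z$ (for a cyclic loxodromic sequence it converges to a single eigendirection inside $F^+_j$). Hence the compact set violating proper discontinuity must contain a tail of $(x_m)$, so you need the $x_m$ themselves, not merely their limit $x$, to avoid $\mathcal{C}$. Second, your dichotomy (``if $F^-_2\subseteq\mathcal{C}$ then everything lies in $\mathcal{C}$; otherwise some stratum meets the complement'') is neither proved nor sufficient: you need a point outside $\mathcal{C}$ in the \emph{specific} stratum $F^-_j\setminus F^-_{j+1}$ for the specific $j$ with $z\in F^+_j\setminus F^+_{j-1}$. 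What can be salvaged cheaply is the bottom of the flag: $\psi(\Lambda(\Gamma))\subseteq\mathcal{C}$ follows from proximality alone, since an attracting fixed point of $\varrho(\gamma)$ lying outside the closed invariant set $\mathcal{C}$ would admit a compact neighborhood $K\subseteq\CP^n\setminus\mathcal{C}$ with $\varrho(\gamma)^{mm_0}K\cap K\neq\emptyset$ for all $m$. Promoting this from the point $\xi^+_1$ to the whole subspace $\xi^+_{\lfloor (n+2)/2\rfloor}$ is the real content of minimality, and neither your sketch nor the paper's proof supplies it.
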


\begin{proof}
Assume that $W\subset \CP^n$ is a closed invariant subset such that the action of $G$ on $\CP^n\setminus W$ is properly discontinuous. From the hypothesis, we have that $\Gamma$ satisfy the (CG)-property, therefore each limit point $p\in\Lambda(\Gamma)$ is conical and there exists a sequence of distinct element $\left(\varrho(\gamma_m)\right)_{m\in\N}\subset G$ that is of loxodromic type. If $\zeta\in W$ in particular $\zeta\in \CP^n\setminus \Eq(G)$ and $\zeta \in\xi^+_n(g)$ where $g=\ds{\lim_{m\to\infty}g_m}$ and $\left(g_m\right)_{m\in\N}$ is a divergent sequence of loxodromic type. If $\zeta\in \xi^+_n(g)\setminus \xi^+_{\lfloor \frac{n+2}{2}\rfloor},$ by the fact that $\left(g_m\right)_{m\in\N}$ is of loxodromic type, then we assure that $\zeta\in \xi^+_{\lfloor \frac{n+2}{2}\rfloor}(f)$ where $f=\ds{\lim_{m\to\infty} g^{-1}_m}\in \Ps(n+1,\C).$
 \end{proof}

The previous Theorem has big implications about our expectations over the Kulkarni limit set. The following Corollary implies that in higher dimensional setting the Kulkarni limit set is not the best option to look forward as a limit set and not just for the higher complication of computing but also for its big composition of elements. 

\begin{Crl}
Let $\Gamma\subset\psl(2,\C)$ be a Kleinian group that satisfy the (CG)-property, and denote by $G$ its associated Veronese. The following are satisfied
 \begin{eqnarray}
  Eq(G)&=&\CP^n\setminus L^{Ext}_{CG}(G)\\
  \OKul(G)& = & \CP^n\setminus L^{Ext}_{CG}(G)
 \end{eqnarray}
\end{Crl}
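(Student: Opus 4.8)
The plan is to derive both identities from the two preceding results: the Main Theorem (\ref{MThm:Kulkarni}), which gives $\LKul(G)=\CP^n\setminus\Eq(G)$, and the theorem immediately above, which identifies $L^{Ext}_{CG}(G)$ as \emph{the} minimal $G$-invariant closed set on whose complement $G$ acts properly discontinuously. The first step is to observe that the (CG)-property forces $\Gamma$ to be convex co-compact: by Proposition 2.4 of \cite{Conze2000} the Conze-Guivarc'h limit set is minimal, hence equals $\Lambda(\Gamma)$, so every limit point is the dominant vector of a proximal (hence loxodromic) element, hence a conical point; a Kleinian group all of whose limit points are conical is convex co-compact. Therefore the Main Theorem applies to $\Gamma$, giving $\OKul(G)=\CP^n\setminus\LKul(G)=\Eq(G)$. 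This is exactly the second displayed identity, and it also reduces the whole Corollary to proving the single equality $\Eq(G)=\CP^n\setminus L^{Ext}_{CG}(G)$, equivalently $\LKul(G)=L^{Ext}_{CG}(G)$.

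The second step handles the inclusion $\CP^n\setminus\Eq(G)\supseteq L^{Ext}_{CG}(G)$, i.e. $L^{Ext}_{CG}(G)\subseteq\LKul(G)$. This is the easy direction and is already essentially recorded in the excerpt: by construction $L^{Ext}_{CG}(G)=\overline{\bigcup_{p\in\Lambda(\Gamma)}\xi^+_{\lfloor(n+2)/2\rfloor}(\psi(p))}$, and each subspace $\xi^+_{\lfloor(n+2)/2\rfloor}(\psi(p))$ is contained in the full flag-top $\xi^+_n(g)=F^+_n$, which in turn lies in the osculating hyperplane $T_{\psi(p)}\V\subset\CP^n\setminus\Eq(G)$ by the description of $\CP^n\setminus\Eq(G)$ in the Equicontinuity section and the $\lambda$-lemma (Proposition \ref{Prp:Lambdalemma}). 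Since $\CP^n\setminus\Eq(G)=\LKul(G)$ is closed and $G$-invariant, taking closures gives $L^{Ext}_{CG}(G)\subseteq\LKul(G)$.

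The third step — and the main obstacle — is the reverse inclusion $\LKul(G)\subseteq L^{Ext}_{CG}(G)$. Here one invokes the minimality clause of the theorem above: $L^{Ext}_{CG}(G)$ is the \emph{minimal} $G$-invariant closed set whose complement is a domain of proper discontinuity. By Proposition \ref{Ch0:Prp:KulkarniProperties}(i)--(ii), $\LKul(G)$ is a $G$-invariant closed set on whose complement $G$ acts properly discontinuously, so minimality immediately yields $L^{Ext}_{CG}(G)\subseteq\LKul(G)$ — but I want the opposite inclusion, so the quoted minimality alone is not enough. The fix is to note that the argument proving the theorem above actually shows more: every point $\zeta$ of \emph{any} closed $G$-invariant set $W$ with $\CP^n\setminus W$ a domain of discontinuity must lie in some $\xi^+_{\lfloor(n+2)/2\rfloor}(g)$; applying this with $W=\LKul(G)$ (legitimate since $\OKul(G)$ is a domain of proper discontinuity) gives $\LKul(G)\subseteq\bigcup_{g}\xi^+_{\lfloor(n+2)/2\rfloor}(g)\subseteq L^{Ext}_{CG}(G)$. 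The delicate point to verify carefully is that the quasi-projective limits $g$ arising from points of $\LKul(G)$ are all of loxodromic type — which holds precisely because $\Gamma$ is convex co-compact (all limit points conical), by Lemma \ref{Lmm:GeometrycallyFiniteVeroneseGroups} and the remark following it. Combining the two inclusions gives $\LKul(G)=L^{Ext}_{CG}(G)$, hence $\Eq(G)=\CP^n\setminus L^{Ext}_{CG}(G)$ and $\OKul(G)=\CP^n\setminus L^{Ext}_{CG}(G)$, as claimed.
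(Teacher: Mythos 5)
Your Steps 1 and 2 are correct and reconstruct exactly what the paper leaves implicit: the paper gives no proof of this corollary, intending it to follow from Theorem \ref{MThm:Kulkarni} (applicable because the (CG)-property forces all limit points to be conical, hence $\Gamma$ convex co-compact) together with the minimality theorem immediately preceding it; and the inclusion $L^{Ext}_{CG}(G)\subseteq\CP^n\setminus\Eq(G)=\LKul(G)$ is indeed recorded in the paper verbatim (``by its definition it is contained in $\CP^n\setminus\Eq(\varrho(\Gamma))$''). You have also correctly identified the crux: the quoted minimality gives only $L^{Ext}_{CG}(G)\subseteq\LKul(G)$, which is the direction you already have, and the corollary needs the reverse inclusion $\LKul(G)\subseteq L^{Ext}_{CG}(G)$.

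Your proposed repair of Step 3 does not close this gap, because the claim it rests on is false for $n\geq 3$. The assertion that every $\zeta$ in a closed invariant $W$ with properly discontinuous complement lies in some $\xi^+_{\lfloor\frac{n+2}{2}\rfloor}$ reduces (following the proof of the minimality theorem) to the step ``if $\zeta\in\xi^+_n(g)\setminus\xi^+_{\lfloor\frac{n+2}{2}\rfloor}(g)$ then $\zeta\in\xi^+_{\lfloor\frac{n+2}{2}\rfloor}(f)$ with $f=\lim g_m^{-1}$.'' But $\xi^+_{\lfloor\frac{n+2}{2}\rfloor}(f)$ is the single subspace $F^-_{n+2-\lfloor\frac{n+2}{2}\rfloor}(g)$ of projective dimension $\lfloor\frac{n}{2}\rfloor$, and it cannot contain the complement of a $\lfloor\frac{n}{2}\rfloor$-dimensional subspace inside the $(n-1)$-dimensional hyperplane $\xi^+_n(g)$ once $n\geq 3$. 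The obstruction is in fact global and shows the target inclusion fails, not merely that this argument for it fails: for a Schottky group $\Gamma$, the set $\LKul(G)=\bigcup_{z\in\Lambda(\Gamma)}T_{\psi(z)}\V$ contains a full osculating hyperplane, of real dimension $2n-2$, whereas $L^{Ext}_{CG}(G)$ is a union, parametrized by the Cantor set $\Lambda(\Gamma)$, of projective subspaces of real dimension $2\lfloor\frac{n}{2}\rfloor$, so its Hausdorff dimension is at most $2\lfloor\frac{n}{2}\rfloor+\dim_H\Lambda(\Gamma)<2n-2$ for every $n\geq 3.$ Hence $\LKul(G)\not\subseteq L^{Ext}_{CG}(G)$ and the two displayed equalities can only hold when $\lfloor\frac{n+2}{2}\rfloor=n$, i.e.\ $n\leq 2$, where the dominated subspace is already the whole tangent line. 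In short: the easy inclusion and the reduction are fine, but the essential reverse inclusion is not provable by the route you propose (nor, as far as the stated results go, by the paper's), and for $n\geq 3$ it is actually false.
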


\bibliographystyle{abbrv}
\bibliography{DVG082021v4}

\end{document}